\newtheorem{oldthm}{Theorem}
\newtheorem{thm}{Theorem}
\newtheorem{lem}[thm]{Lemma}
\newtheorem{prop}[thm]{Proposition}
\newtheorem{cor}[thm]{Corollary}
\newtheorem{conj}[thm]{Conjecture}
\newtheorem{oldqn}[oldthm]{Question}
\newtheorem{qn}[thm]{Question}
\theoremstyle{definition}
\newtheorem*{defn}{Definition}
\newcommand{\sset}[2]{[#1]^{(#2)}}
\newcommand{\A}{\mathcal{A}}
\newcommand{\F}{\mathcal{F}}
\newcommand{\N}{\mathbb{N}}
\newcommand{\Pb}{\mathbb{P}}
\newcommand{\pg}{+}
\newcommand{\tup}[1]{^{(#1)}}
\newcommand{\otup}[1]{^{\underline{#1}}}
\newcommand{\es}{\emptyset}
\newcommand{\sm}{\setminus}
\newcommand{\ith}[1]{$#1$-th}
\newcommand{\dd}{d^\ast}
\newcommand{\nandr}{Ne\v{s}et\v{r}il and R\"{o}dl}
\newcommand{\candh}{Caro and Hansberg}
\begin{document}

\nocite{*}

\title{Degrees in oriented hypergraphs and sparse Ramsey theory}

\author{Vytautas Gruslys\footnote
{
Department of Pure Mathematics and Mathematical Statistics,
Centre for Mathematical Sciences,
Wilberforce Road,
Cambridge CB3 0WB,
United Kingdom;
e-mail: {\tt v.gruslys@dpmms.cam.ac.uk}
}}

\maketitle

\begin{abstract}
    Let $G$ be an $r$-uniform hypergraph. When is it possible to orient the
    edges of $G$ in such a way that every $p$-set of vertices has some
    $p$-degree equal to $0$? (The $p$-degrees generalise for sets of vertices
    what in-degree and out-degree are for single vertices in directed graphs.)
    \candh{} asked if the obvious Hall-type necessary condition is also
    sufficient.

    Our main aim is to show that this is true for $r$ large (for given $p$),
    but false in general. Our counterexample is based on a new
    technique in sparse Ramsey theory that may be of independent interest.
\end{abstract}


\section{Introduction}

When does a graph $G$ have an orientation such that every out-degree is at most $k$?
An obvious necessary condition is that $|E(H)| \le k|V(H)|$ for every subgraph $H \subset G$.
Indeed, suppose $G$ has such an orientation and $H \subset G$. Inside $H$ the sum of
out-degrees equals the number of edges. Moreover, each vertex contributes at most
$k$ to this sum, hence the condition. Hakimi proved that this condition is in fact sufficient.

\begin{oldthm}[Hakimi, \cite{hakimi65}]
	\label{thm:hakimi}
	Let $G$ be a graph and $k \ge 0$ an integer. Then $G$ has an orientation such that
	every vertex has out-degree at most $k$ if and only if all subgraphs $H \subset G$
	satisfy $|E(H)| \le k|V(H)|$.
\end{oldthm}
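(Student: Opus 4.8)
The plan is to reduce the existence of a good orientation to a bipartite matching problem and then apply Hall's theorem. Given $G$ and $k$, I would form a bipartite graph $B$ whose left vertex class is $E(G)$ and whose right vertex class is $V(G) \times \{1,\dots,k\}$, joining each edge $e = xy \in E(G)$ to all the pairs $(x,i)$ and $(y,i)$ with $1 \le i \le k$. The key observation is that an orientation of $G$ in which every out-degree is at most $k$ corresponds exactly to a matching in $B$ saturating $E(G)$: orienting $e = xy$ from $x$ to $y$ amounts to matching $e$ to one of the (at most $k$) still-free slots $(x,i)$, so the out-degree of a vertex $x$ equals the number of edges matched into $\{x\} \times \{1,\dots,k\}$, which is automatically at most $k$ because there are only $k$ slots; conversely any $E(G)$-saturating matching of $B$ yields such an orientation by reading off, for each edge, which endpoint it was matched to.

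By Hall's theorem, such a matching exists if and only if $|N_B(S)| \ge |S|$ for every $S \subseteq E(G)$. I would then note that for $S \subseteq E(G)$, writing $W = \bigcup_{e \in S} e$ for the set of endpoints, one has $N_B(S) = W \times \{1,\dots,k\}$, hence $|N_B(S)| = k|W|$. Taking $H$ to be the subgraph of $G$ with vertex set $W$ and edge set $S$ — which has no isolated vertices — Hall's condition $|N_B(S)| \ge |S|$ says precisely $k|V(H)| \ge |E(H)|$. Since the hypothesis $|E(H)| \le k|V(H)|$ only needs to be checked for subgraphs $H$ with no isolated vertices (deleting isolated vertices does not change $|E(H)|$ and only decreases $|V(H)|$), the hypothesis is equivalent to Hall's condition for $B$, which gives the theorem. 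Necessity was already verified in the discussion preceding the statement.

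The argument is short and the only real choice is picking the right auxiliary bipartite graph; after that everything is a direct translation, so I do not expect a serious obstacle. The one point needing care is the passage to subgraphs without isolated vertices when matching the two conditions, and — if one wants the statement for infinite $G$ — an appeal to the infinite (compactness) form of Hall's theorem, since $B$ can be infinite; for finite $G$ the classical theorem suffices. As a fallback I would keep in mind a direct augmenting-path proof: start from an arbitrary orientation and, while some vertex $v$ has out-degree exceeding $k$, search for a directed path from $v$ to a vertex of out-degree less than $k$ and reverse it; if no such path exists, the set $R$ of vertices reachable from $v$ together with the edges it spans forms a subgraph with more than $k|R|$ edges, contradicting the hypothesis. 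In that version the main obstacle is the bookkeeping needed to show $R$ is genuinely overfull, which the matching proof avoids entirely.
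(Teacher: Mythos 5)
Your proof is correct, and it is essentially the approach the paper itself indicates: the statement is attributed to Hakimi with the remark that it is a straightforward consequence of Hall's marriage theorem, and your bipartite graph on $E(G)$ versus $V(G)\times\{1,\dots,k\}$ is exactly the standard realisation of that, matching the construction the paper later uses to prove its generalisation (Theorem~\ref{thm:hakforpsets}). The one point of care you flag --- restricting to subgraphs without isolated vertices --- is handled correctly.
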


In fact, Hakimi proved a slightly stronger result that determines all possible out-degree
sequences produced by orientations of a given graph $G$. His proof uses induction on the number
of edges, but the weaker statement given above is a straightforward consequence of Hall's marriage
theorem. 

What about hypergraphs? Suppose an $r$-uniform hypergraph $G$ (i.e. a family of $r$-sets)
is given an \emph{orientation}, by which we mean that for each edge $e$ one
ordering of the vertices of $e$ is chosen.  This ordering is called the
\emph{orientation of $e$}. Note that each edge has exactly $r!$ possible orientations. If $r = 2$
then this coincides with the usual definition of graph orientation. We will often denote an orientation
of $G$ by $D(G)$ and the corresponding orientation of an edge $e$ by $D(e)$. 

Given an orientation $D(G)$, a vertex $v$ and $i \in [r] = \left\{1, 2, \dots, r\right\}$, the \emph{$i$-degree}
of $v$, written $d_i(v)$, is the number of edges $e$ such that $v$ is in the \ith{i} position of
$D(e)$. Note that if $r = 2$ then $d_1(v)$ is  the out-degree and $d_2(v)$ is the in-degree of $v$.

When does an $r$-uniform hypergraph $G$ have an orientation such that $d_1(v) \le k$
for every vertex $v$? Again, an obvious necessary condition is that $|E(H)| \le k|V(H)|$
for every subgraph $H \subset G$ (where as usual $H$ is a subgraph of $G$ if $V(H) \subset V(G)$ and
$E(H) \subset E(G)$). Indeed, inside $H$ the sum of $d_1(v)$ over all vertices $v$ of $H$
is equal to the number of edges of $H$, and is at most $k$ times the number of vertices.

Caro and Hansberg showed that this condition is sufficient.

\begin{oldthm}[\candh, \cite{caro12}] \label{thm:hakforhyp}
	Let $G$ be an $r$-uniform hypergraph and $k \ge 0$ an integer. Then $G$ has an orientation
	such that $d_1(v) \le k$ for all vertices $v$ if and only if all subgraphs
	$H \subset G$ satisfy $|E(H)| \le k|V(H)|$.
\end{oldthm}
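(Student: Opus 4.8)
The necessity of the condition is already established in the discussion above, so the task is to prove sufficiency, and the plan is to imitate the deduction of Hakimi's theorem (Theorem~\ref{thm:hakimi}) from Hall's marriage theorem. The first step is to notice that, as far as the quantities $d_1(v)$ are concerned, an orientation of $G$ is nothing more than a choice, for each edge $e$, of which vertex of $e$ sits in the first position of $D(e)$; the order of the remaining $r-1$ vertices has no effect on any $1$-degree. Hence it suffices to produce a function $h \colon E(G) \to V(G)$ with $h(e) \in e$ for every edge $e$ and $|h^{-1}(v)| \le k$ for every vertex $v$. Indeed, any orientation in which $h(e)$ is placed first in each $D(e)$ (and the rest of $e$ ordered arbitrarily) then satisfies $d_1(v) \le k$ everywhere.

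The second step is to encode the existence of such an $h$ as a bipartite matching. Build a bipartite graph $B$ with one part equal to $E(G)$ and the other part consisting of $k$ formal copies $v^{(1)}, \dots, v^{(k)}$ of each vertex $v \in V(G)$, joining an edge $e$ to every copy of every vertex of $e$. A matching of $B$ that saturates $E(G)$ is precisely a choice of $h$ together with a choice, for each vertex, of which of its $k$ ``slots'' is used by each edge mapped to it; conversely any valid $h$ yields such a matching since $h^{-1}(v)$ has at most $k$ elements. So the desired $h$ exists if and only if $B$ has a matching saturating $E(G)$, and by Hall's marriage theorem this holds if and only if every $\F \subseteq E(G)$ satisfies $|N_B(\F)| \ge |\F|$.

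The final step is to check this Hall condition using the hypothesis. Fix $\F \subseteq E(G)$ and let $W = \bigcup_{e \in \F} e$. The neighbourhood of $\F$ in $B$ is exactly $\{\, v^{(i)} : v \in W,\ 1 \le i \le k \,\}$, so $|N_B(\F)| = k|W|$. But $H = (W, \F)$ is a subgraph of $G$, so the assumption gives $|\F| = |E(H)| \le k|V(H)| = k|W| = |N_B(\F)|$, which is the Hall condition. Therefore the matching, hence $h$, hence the required orientation, exists.

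The argument is short, and the only points that need care are the reduction to choosing a ``first vertex'' and the neighbourhood count in $B$, so I do not expect a real obstacle here — this parallels the remark above that the graph case of Hakimi's theorem follows straightforwardly from Hall's theorem. The one caveat is infiniteness: if $G$ is allowed to be infinite, the finite case just proved is promoted to the general statement by a routine compactness argument (for instance via Rado's selection lemma, or by passing to a max-flow formulation), so it does no harm to assume $G$ finite throughout.
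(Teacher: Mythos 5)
Your proof is correct and takes essentially the route the paper itself endorses: the paper cites Caro--Hansberg's max-flow argument but remarks that a simple proof via Hall's marriage theorem is possible, and its proof of the generalisation (Theorem~\ref{thm:hakforpsets}) specialised to $p=1$ is exactly your argument, differing only cosmetically in that it uses the $1$-to-$k$ matching form of Hall's theorem rather than splitting each vertex into $k$ copies.
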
 

They proved it by constructing a suitable maximal flow on $H$, and a simple proof via
Hall's marriage theorem is also possible.

Now, in contrast to the situation for graphs, for oriented hypergraphs there is a sensible notion
of degree for sets of \emph{multiple} vertices. For example, given an orientation $D(G)$
and a pair of vertices $u,v$, we can define $d_{12}(u,v)$ to be the number of edges
$e$ such that $u$ and $v$ (in some order) are in the first two positions of $D(e)$.
So if the oriented edges are $(4,5,1), (4,1,3), (1,4,2)$ (where the vertex set is $[5]$) then
$d_{12}(1,4) = 2$.

More generally, for a $p$-set of vertices $A = \left\{v_1,\dotsc,v_p
\right\} \subset V$ and a $p$-set $I \subset [r]$, the \emph{$I$-degree}
of $A$, denoted by $d_I(A)$, is the number of edges $e$ such that the elements
of $D(e)$ in positions labeled by $I$ are $v_1, \dotsc, v_p$ (in some order).
More formally, $d_I(A)$ is the number of edges $e$ such that if we write $D(e) = 
\left(x_1, \dotsc, x_r \right)$ then $\{x_i \colon i\in I\}$ is exactly the set $A$.

[We mention in passing that there is a  variant of this notion where the mutual order of
$u$ and $v$ is important.  However, this alternative definition turns out to be less interesting for
the types of questions examined by \candh{} and by us. In this paper we mainly consider the former
notion (of `unordered' degrees), but  a brief analysis of the latter is given in Section
\ref{sec:ordereddegrees}.]

In \cite{caro12} \candh{} asked if a result similar to their Theorem \ref{thm:hakforhyp} can be found in
the setting of degrees for multiple vertices.

\begin{oldqn}[\candh, \cite{caro12}]
	\label{qn:genhak}
	Fix integers $k \ge 0$ and $1 \le p \le r$. Which $r$-uniform hypergraphs $G$ have
    an orientation such that $d_{[p]}(A) \le k$ for all $p$-sets of vertices $A \subset V$?
\end{oldqn}

Again there is an obvious necessary condition: if $G$ has such an orientation then for each
collection of $p$-sets $U \subset V\tup{p}$ at most $k|U|$ edges $e$ satisfy $e\tup{p} \subset U$
(here we write $X\tup{p}$ for the set of all $p$-subsets of $X$).  Indeed, given $U$, every $e$
with $e\tup{p} \subset U$ contributes $1$ to the sum $\sum_{A \in U} d_{[p]}(A)$, and this sum is
at most $k|U|$.

Our first aim is to answer the question of \candh{} by showing that this condition is sufficient.
Note that case $p = 1$ is Theorem \ref{thm:hakforhyp}.

\begin{restatable}{thm}{hakforpsets} \label{thm:hakforpsets}
	Let $G$ be an $r$-uniform hypergraph and $k \ge 0, 1 \le p \le r$ integers. Then $G$ has
    an orientation such that $d_{[p]}(A) \le k$ for all $A \in V\tup{p}$ if and only if for each
    $U \subset V\tup{p}$ at most $k|U|$ edges $e$ satisfy $e\tup{p} \subset U$.
\end{restatable}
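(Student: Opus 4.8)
The necessity of the condition is already established in the discussion preceding the statement, so the plan is to prove sufficiency, and to do so by reducing to a defect (capacitated) version of Hall's marriage theorem. The key observation is that, for the purposes of the degrees $d_{[p]}$, most of the information in an orientation is irrelevant: the value $d_{[p]}(A)$ depends only on which $p$-subset of each edge occupies positions $1, \dots, p$, not on the internal order of those positions nor on the order of the remaining $r-p$ positions. Hence it suffices to choose, for every edge $e \in E(G)$, a $p$-set $f(e) \in e\tup{p}$, and then to extend $f(e)$ to a full ordering of $e$ in any way; the resulting orientation $D(G)$ satisfies $d_{[p]}(A) = |f^{-1}(A)|$ for every $A \in V\tup{p}$. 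So Theorem \ref{thm:hakforpsets} is equivalent to the claim that, under the stated hypothesis, there is a choice function $f$ with $|f^{-1}(A)| \le k$ for all $A$.

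Next I would set up the bipartite incidence graph $B$ with parts $E(G)$ and $V\tup{p}$ (one may restrict the second part to $p$-sets lying inside some edge), joining $e$ to $A$ exactly when $A \subseteq e$; every vertex on the $E(G)$ side has degree $\binom{r}{p}$, which is finite. Replacing each $p$-set $A$ by $k$ twin copies $A^1, \dots, A^k$ (each with the same neighbourhood as $A$) yields a bipartite graph $B'$ in which a matching saturating $E(G)$ is precisely the same datum as a choice function $f$ with every fibre of size at most $k$. Since the $E(G)$ side of $B'$ still has all degrees finite (namely $k\binom{r}{p}$), Hall's theorem applies and such a matching exists if and only if Hall's condition holds, i.e. $|S| \le k\,|N_B(S)|$ for every finite $S \subseteq E(G)$, where $N_B(S) = \bigcup_{e \in S} e\tup{p}$.

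It then remains to deduce Hall's condition from the hypothesis. Given a finite $S \subseteq E(G)$, set $U = N_B(S) = \bigcup_{e \in S} e\tup{p} \subseteq V\tup{p}$. Every $e \in S$ satisfies $e\tup{p} \subseteq U$, so the number of edges $e$ with $e\tup{p} \subseteq U$ is at least $|S|$; by hypothesis this number is at most $k|U| = k|N_B(S)|$, giving $|S| \le k|N_B(S)|$. (In fact this special case is equivalent to the full hypothesis, since for arbitrary $U \subseteq V\tup{p}$ one may take $S = \{e : e\tup{p} \subseteq U\}$ and note $N_B(S) \subseteq U$.) This completes the reduction and hence the proof.

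I do not anticipate a real obstacle: the substance lies entirely in the reduction to a choice function on the edges together with the observation that only the first $p$-set matters, after which everything is a routine application of defect Hall. The single point needing slight care is the case of an infinite hypergraph $G$, where one must use the form of Hall's theorem valid whenever every vertex of one part has finite degree (equivalently, a standard compactness argument over finite subhypergraphs of $G$); the finite-degree property of the $E(G)$ side secured above is exactly what makes this available.
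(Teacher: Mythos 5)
Your proposal is correct and follows essentially the same route as the paper: both set up the bipartite incidence graph between $E(G)$ and $V\tup{p}$, observe that an orientation is (for the purposes of $d_{[p]}$) just a choice of a $p$-subset of each edge, and apply a $1$-to-$k$ form of Hall's theorem, with the hypothesis verified by taking $U$ to be the neighbourhood of a set of edges. The only differences are expository — you make the twin-copy reduction and the infinite case explicit, which the paper leaves implicit.
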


\candh{}'s interest in Theorem \ref{thm:hakforhyp} was to answer the following question. When does
an $r$-uniform hypergraph $G$ have an orientation such that for every vertex $v$ there is \emph{some}
$i \in [r]$ such that $d_i(v) \le k$? It is once again easy to obtain a necessary condition:
such an orientation would partition the vertices into sets $V_1, \dotsc, V_r$ where each $v$
is put into some $V_i$ with $d_i(v) \le k$. Theorem \ref{thm:hakforhyp} applied to the induced
subgraphs $G[V_i]$ gives the necessary condition $|E(H)| \le k|V(H)|$ for all $H \subset G[V_i]$.
They proved that this condition is sufficient.

\begin{oldthm}[\candh, \cite{caro12}] \label{thm:alldeg}
	Let $G$ be an $r$-uniform hypergraph and $k \ge 0$ an integer. Then $H$ has an orientation such
	that for each vertex $v$ some $1 \le i \le r$ satisfies $d_i(v) \le k$ if and
	only if $V$ can be partitioned into $r$ sets $V_1, \dotsc, V_r$ such that for
	each $j$ and each $U \subset V_j$ at most $k|U|$ edges $e$ satisfy $e \subset U$.
\end{oldthm}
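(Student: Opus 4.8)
The plan is to prove sufficiency; necessity is essentially the remark made just before the statement, and can be seen directly: if $G$ has an orientation in which each vertex $v$ satisfies $d_{i(v)}(v)\le k$ for some $i(v)\in[r]$, put $v$ into $V_{i(v)}$ (breaking ties arbitrarily). Then whenever $U\subseteq V_j$ and $e\subseteq U$ is an edge, $e$ has a unique vertex in position $j$ and that vertex lies in $U$; since each $w\in U$ plays this role for at most $d_j(w)=d_{i(w)}(w)\le k$ edges, at most $k|U|$ edges lie inside $U$.

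For sufficiency, suppose $V=V_1\cup\dots\cup V_r$ is a partition with the stated property and write $j(v)$ for the index with $v\in V_{j(v)}$. For each $j$ let $G_j$ be the $r$-uniform hypergraph on vertex set $V_j$ whose edges are the edges of $G$ contained in $V_j$; applying the hypothesis with $U=V(H)$ shows that every subgraph $H\subseteq G_j$ satisfies $|E(H)|\le k|V(H)|$, so Theorem~\ref{thm:hakforhyp} provides an orientation $D_j$ of $G_j$ with $d_1(v)\le k$ for every $v\in V_j$, where this $1$-degree is measured in $D_j$. I would then build an orientation $D$ of $G$ one edge at a time. Call an edge \emph{monochromatic} if it lies inside some (necessarily unique) $V_j$. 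For a monochromatic $e\subseteq V_j$, let $u_e$ be the vertex in position $1$ of $D_j(e)$, and let $D(e)$ be any ordering of $e$ that puts $u_e$ in position $j$. For a non-monochromatic $e$, I claim there is an ordering of $e$ in which no vertex $w$ occupies position $j(w)$; this I would obtain from Hall's theorem, matching each $w\in e$ to a position in $[r]\setminus\{j(w)\}$. For $S\subseteq e$ the number of positions available to $S$ is $r$ or $r-1$, and it is $r-1$ only when $j(\cdot)$ is constant on $S$, which for $|S|=r$ would make $e$ monochromatic, contrary to assumption; hence the number of available positions is always at least $|S|$, so the required matching --- equivalently, the required ordering --- exists, and I take $D(e)$ to be it.

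It then remains to verify $D$. Fix $v$ and set $j=j(v)$; I claim $d_j(v)\le k$ in $D$. A non-monochromatic edge places none of its vertices $w$ in position $j(w)$, so it contributes nothing to $d_j(v)$. A monochromatic edge $e$ that contains $v$ must lie inside $V_j$, i.e.\ $e\in E(G_j)$, and it contributes to $d_j(v)$ exactly when $v=u_e$, that is, when $v$ is in position $1$ of $D_j(e)$. Hence $d_j(v)$ equals the $1$-degree of $v$ in $D_j$, which is at most $k$, so $i=j$ witnesses the conclusion for $v$.

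The only genuine idea in this is that the problem decouples entirely. Non-monochromatic edges can always be oriented ``neutrally'', so that every vertex avoids its own index, and once this is done such edges are invisible to every constraint; what is left is the task of distributing, within each colour class separately and with no interaction between classes, the ``blame'' for the monochromatic edges among their vertices so that no vertex is blamed more than $k$ times --- and that is precisely the $1$-degree orientation problem answered by Theorem~\ref{thm:hakforhyp}. I expect the only point needing care to be the Hall check for a single non-monochromatic edge, and that is routine; crucially it involves no dependence on how any other edge is oriented, so there is no global coordination to manage.
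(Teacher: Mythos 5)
Your proof is correct. Note that the paper does not actually prove Theorem~\ref{thm:alldeg} --- it is quoted from Caro and Hansberg --- so there is no in-paper proof to compare against; the paper only sketches the necessity direction in the paragraph preceding the statement, and your necessity argument matches that sketch (put $v$ into $V_{i(v)}$ and count contributions to $\sum_{w\in U} d_j(w)$). Your sufficiency argument is sound: the Hall check for a non-monochromatic edge is exactly as you say (a subset $S\subseteq e$ sees $r-1$ positions only when $j(\cdot)$ is constant on $S$, which forces $|S|\le r-1$), and the verification that $d_{j(v)}(v)$ in $D$ equals the $1$-degree of $v$ in $D_{j(v)}$ is clean because non-monochromatic edges contribute nothing. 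It is worth observing that your argument is precisely the $p=1$ instance of the strategy the paper uses for the general $p$-set version, Proposition~\ref{prop:partthenorient-genk}: there the induced subgraphs $G[W_I]$ are oriented via Theorem~\ref{thm:hakforpsets} and the remaining edges are handled by Lemma~\ref{lem:orient-edge}, whose hypothesis (the fixed intersection property) is needed to orient a single edge avoiding all its forbidden position-sets. For $p=1$ that lemma degenerates to your system-of-distinct-representatives step, and the fixed intersection property holds trivially (a nonconstant $f\colon[r]\to[r]$ always has $f(x)\neq f(y)$ for some $x\neq y$), which is why no extra hypothesis on $r$ appears in Theorem~\ref{thm:alldeg}. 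So your ``decoupling'' observation is exactly the right one, and it is the same decoupling the paper exploits later in greater generality.
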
 

For degrees of multiple vertices, the first case $k=0$ is already non-obvious.
When can an $r$-uniform hypergraph $G$ be given an orientation such that for any $p$-set of vertices
$A$ there is \emph{some} $p$-set $I \subset [r]$ such that $d_I(A) = 0$? Such an orientation would
partition $V\tup{p}$ into $\binom{r}{p}$ sets $W_I, \, I \in [r]\tup{p},$ where each
$A \in V\tup{p}$ is put into $W_I$ with $d_I(A) = 0$. For any $I$ and any edge $e$
the $p$-set of vertices that are in positions labeled by $I$ in $D(e)$ must not belong
to $W_I$. So there are no edges whose $p$-sets would all belong to a single $W_I$, giving a necessary
condition. \candh{} asked if, similarly to the case $p = 1$, this condition is
sufficient.

\begin{oldqn}[\candh, \cite{caro12}] \label{qn:orient-part}
    Let $1 \le p \le r$ be integers and $G$ an $r$-uniform hypergraph. Suppose $V\tup{p}$
    is partitioned into sets $W_1, \dotsc, W_R$, where $R = \binom{r}{p}$, in such a way
    that for any $1 \le i \le R$ there are no edges $e$ such that $e\tup{p} \subset W_i$.
    Must $G$ have an orientation such that for any $p$-set of vertices $A$ there is some
    $p$-set $I \subset [r]$ such that $d_I(A) = 0$?
\end{oldqn}

Our main aim of the paper is to show that the answer to Question~\ref{qn:orient-part} is positive for
$r$ much larger than $p$, but negative in general.

It will turn out in this paper that the following notion is crucial to understanding
Question~\ref{qn:orient-part}.  We wish to determine if a function
$f \colon \sset{n}{p} \to \sset{n}{p}$ must `fix an intersection' in the sense explained by the next definition.

\begin{defn}
    Let $1 \le p \le n$ be integers and $f \colon \sset{n}{p} \to \sset{n}{p}$ a function. We say
    that $f$ \emph{fixes an intersection} if there are distinct $x, y \in \sset{n}{p}$ such that
	$|f(x) \cap f(y)| = |x \cap y|$.

    Moreover, if every nonconstant function $f \colon \sset{n}{p} \to \sset{n}{p}$ fixes an intersection
    then we say that $\sset{n}{p}$ has the \emph{fixed intersection property}.
\end{defn}

It is not difficult to see that if $p \ge 2$ then $\sset{2p}{p}$ does not have the fixed intersection
property. This can be demonstrated by choosing $y = [p]$, $\bar{y} = [2p] \sm [p] = \{p+1, \dotsc, 2p\}$
and defining $f \colon \sset{2p}{p} \to \sset{2p}{p}$ by
$$
    f(x) =
    \begin{cases}
        y & \text{ if } x = y \text{ or } x = \bar{y} \\
        \bar{y} & \text{ otherwise}.
    \end{cases}
$$
This $f$ is nonconstant and does not fix an intersection.

We conjecture that $n = 2p$ (for $p \ge 2$) is actually the only exceptional case.

\begin{conj} \label{conj:main}
    Let $1 \le p \le n$ be integers with $n \neq 2p$. Then $\sset{n}{p}$
    has the fixed intersection property.
\end{conj}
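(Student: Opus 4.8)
The plan is to argue by contradiction: suppose $f\colon\sset{n}{p}\to\sset{n}{p}$ is nonconstant and fixes no intersection, and deduce that $n=2p$. First dispose of the degenerate ranges. If $n\le p+1$ (in particular if $p=1$), then any two distinct $p$-subsets of $[n]$ meet in the same number of points, so a nonconstant $f$ automatically fixes an intersection; hence we may assume $p\ge 2$ and $n\ge p+2$. Using the complementation bijection $x\mapsto[n]\sm x$, which carries $\sset{n}{p}$ onto $\sset{n}{n-p}$, turns intersection size $m$ into intersection size $n-2p+m$, and preserves the condition $n\ne 2p$, we may further assume $n\ge 2p$, hence $n\ge 2p+1$. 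For $0\le m\le p-1$ write $G_m$ for the graph on $\sset{n}{p}$ in which $x\sim y$ iff $|x\cap y|=m$; the hypothesis on $f$ says exactly that, setting $c(x,y)=|x\cap y|$, one has $c(f(x),f(y))\ne c(x,y)$ for all distinct $x,y$, i.e. $f$ sends no edge of any $G_m$ to an edge of the same $G_m$.

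The first main step is to prove that each $G_m$ (for $0\le m\le p-1$ and $n\ge 2p+1$) is connected. For $m=p-1$ this is the Johnson graph $J(n,p)$; for $m=0$ it is the Kneser graph $K(n,p)$, which is connected once $n\ge 2p+1$; for the intermediate values one argues directly that from an arbitrary $p$-set one can reach $[p]$ by a bounded sequence of moves each preserving the intersection with a well-chosen reference set, using the slack afforded by $n\ge 2p+1$ (the binding case is $m=0$, which is exactly the one that fails when $n=2p$). This already settles every $f$ whose image has at most two elements: if the image is $\{a,b\}$ then, with $m=|a\cap b|$, the fibres $f^{-1}(a)$ and $f^{-1}(b)$ partition $\sset{n}{p}$ with no $G_m$-edge between them, contradicting connectivity of $G_m$. (This is where $n\ne 2p$ enters: when $n=2p$ the graph $G_0$ is a perfect matching, and its two-element components are precisely what make the construction preceding Conjecture~\ref{conj:main} work.)

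For a function whose image has three or more elements the idea is to combine the connectivity above with a local counting argument. Fix a vertex $x$ and, for $0\le j\le p-1$ and $0\le k\le p$, let $M_{jk}(x)$ be the number of $y\ne x$ with $|x\cap y|=j$ and $|f(x)\cap f(y)|=k$. Then $M_{jj}(x)=0$, the row sums are the fixed numbers $N_j:=\binom{p}{j}\binom{n-p}{p-j}$, and the column sum in column $k$ is $\sum_{b\colon|f(x)\cap b|=k}|f^{-1}(b)|$ for $k\le p-1$ and $|f^{-1}(f(x))|-1$ for $k=p$. Since a nonnegative integer matrix with vanishing diagonal and these margins exists, for each $j$ the column-$j$ total is at most $\binom{n}{p}-1-N_j$; summing over all $x$ gives, for every $j$, a bound of the form $\sum_{|a\cap b|=j}|f^{-1}(a)|\,|f^{-1}(b)|\le\tfrac12\binom{n}{p}(\binom{n}{p}-1-N_j)$. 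The plan is to show that this family of inequalities, together with the fact that connectivity of each $G_m$ forces, for each pair of fibres, the union of the other fibres to be a large vertex-separator in the appropriate $G_m$, is inconsistent unless the image has at most two elements — whereupon the previous step finishes the proof.

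The main obstacle is precisely this last step: ruling out functions with image of size at least three. When $n$ is large relative to $p$ the difficulty disappears, because $N_0=\binom{n-p}{p}$ then nearly equals $\binom{n}{p}$ while $\sum_{j\ge 1}N_j$ is comparatively tiny, so the $j=0$ inequality above already forces $f$ to be essentially concentrated on a single value, and the residual concentrated maps are killed by the expansion of the $G_m$; this is essentially the regime covered by the sparse-Ramsey technique of the present paper. The genuine difficulty lies in the range $p+2\le n=O(p)$, and especially $n$ only slightly above $2p$, where the $N_j$ are of comparable magnitude and the $G_m$ expand only weakly, so that balancing the local counting against the global connectivity demands a quantitative isoperimetric control of the whole family $\{G_m\}$ on $\sset{n}{p}$ that I do not see how to obtain; this is the reason the statement is recorded here as a conjecture rather than a theorem.
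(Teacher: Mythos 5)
The statement you are addressing is recorded in the paper as Conjecture~\ref{conj:main}: it is not proved there, and your attempt does not prove it either, as you yourself acknowledge. The genuine gap is the entire case in which the image of $f$ has at least three elements: you set up the margin inequalities $\sum_{|a\cap b|=j}|f^{-1}(a)|\,|f^{-1}(b)|\le\tfrac12\binom{n}{p}\bigl(\binom{n}{p}-1-N_j\bigr)$ correctly, but you give no argument that this system is inconsistent with the connectivity of the graphs $G_m$, and this missing step is essentially the whole content of the conjecture rather than a technical loose end. What you do establish is correct but limited: the degenerate range $n\le p+1$, the complementation reduction to $n\ge 2p+1$, and the elimination of maps with image of size two via connectivity of $G_m$ (though the connectivity of the intermediate relation graphs of the Johnson scheme for $n\ge 2p+1$ is asserted, not proved). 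These observations do correctly isolate why $n=2p$ is exceptional --- $G_0$ degenerates to a perfect matching, and the paper's counterexample has image $\{y,\bar y\}$ --- but they say nothing about larger images.

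Moreover, even the large-$n$ regime, which the paper \emph{does} prove as Theorem~\ref{thm:conjpart}, is not recovered by your sketch. Your claim that the $j=0$ inequality ``forces $f$ to be essentially concentrated on a single value'' is not right as stated: that inequality only forces the fibre masses to concentrate on a family with few disjoint pairs, and a star (all $p$-sets through a fixed element) satisfies it with no concentration on any single value. The paper's route is entirely different from yours: Lemma~\ref{lem:technical} shows that a nonconstant $f$ which is constant on $M\tup{p}$ for some $M$ with $|M|\ge 2p-1$ must fix an intersection; Lemma~\ref{lem:innat} then proves the infinite version on $\N\tup{p}$ by applying Ramsey's theorem to the colouring $x\mapsto f(x)\cap[p]$ and inducting on $p$; and Corollary~\ref{cor:infrang} passes to finite $n$ by a compactness argument. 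To obtain a rigorous partial result you should either adopt that Ramsey-theoretic argument or make your counting step quantitative; as written, your proposal proves strictly less than the paper does, and neither settles the conjecture.
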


In Section \ref{sec:hypergraphs} we show that, if Conjecture \ref{conj:main} is true for given
$p$ and $n$, then Question \ref{qn:orient-part} can be answered positively 
(for the same choice of $p$ with $r = n$). In Section \ref{sec:ramsey} we show Conjecture
\ref{conj:main} is true when $r$ is much larger than $p$.

\begin{restatable}{thm}{conjpart} \label{thm:conjpart}
	For every integer $p \ge 1$ there is some $n_0$ such that if $n \ge n_0$ then
    $\sset{n}{p}$ has the fixed intersection property.
\end{restatable}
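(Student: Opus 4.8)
The plan is to argue by contradiction: suppose $f \colon \sset{n}{p} \to \sset{n}{p}$ is nonconstant and fixes no intersection, i.e.\ $|f(x) \cap f(y)| \neq |x \cap y|$ for all distinct $x, y$. The key idea is that ``$f$ fixes no intersection'' is an extremely rigid condition when $n$ is large, and I would exploit it through a Ramsey-type stability argument. First I would look at a large ``sunflower-like'' or ``nested'' configuration inside $\sset{n}{p}$: for instance, fix a chain of sets that pairwise intersect in a prescribed size $t$, or more usefully, consider for each fixed $t \in \{0, 1, \dots, p-1\}$ the structure of $f$ restricted to pairs at intersection $t$. Since $|x \cap y| = t$ must force $|f(x) \cap f(y)| \neq t$, and there are only $p$ possible intersection sizes, by a pigeonhole/Ramsey argument on a large collection of $p$-sets that are pairwise at the \emph{same} intersection size $t$, I would find a large subfamily on which $|f(x)\cap f(y)|$ is constant, equal to some $s \neq t$.

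The second step is to promote this local constancy to global structure. A natural tool is the Hilton--Milner / Frankl-type analysis of families of $p$-sets whose pairwise intersections all have a fixed size $s$: such ``$s$-intersecting-exactly'' families are highly constrained (for $s \ge 1$ they are essentially sunflowers with a core of size $s$, once the family is large, by the Deza--Frankl theorem on families with a single forbidden... rather, a single prescribed intersection size). So I would aim to show that $f$ must collapse a huge chunk of $\sset{n}{p}$ into a structured family $\coll{G}$ — either all containing a common $s$-set, or all inside a common small ground set. Then I would take two points $x, y$ that $f$ sends into $\coll{G}$ but whose own intersection $|x\cap y|$ I can choose freely among several values (possible because $x,y$ range over a large set), and force $|f(x)\cap f(y)| = |x \cap y|$ for one of those choices, contradicting the hypothesis. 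Throughout, ``large'' is quantified by iterated Ramsey numbers, which is fine since we only need \emph{some} $n_0 = n_0(p)$.

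The main obstacle I anticipate is handling the intersection size $t = 0$, i.e.\ disjoint pairs, and its image behaviour — and more generally controlling what happens when the target values $s$ coincide across different source intersection sizes $t$. If disjoint $x, y$ are sent to $f(x), f(y)$ with $|f(x) \cap f(y)| = s$ for some fixed $s \ge 1$, one must rule out that $f$ is, say, a ``dilation'' that shifts every intersection size up by a constant — but that is impossible for size reasons since intersection sizes are bounded by $p$ on both sides, so a counting/pigeonhole argument on the number of available values should close this. Concretely, I would track the induced map on ``intersection patterns'': $f$ would have to induce an injection-like assignment $t \mapsto (\text{value of } |f(x)\cap f(y)| \text{ when } |x\cap y|=t)$ avoiding the diagonal, on the set $\{0,\dots,p\}$, but the boundary cases $t=p$ (forcing $x=y$, so vacuous) and the interplay with the rigidity of exact-intersection families should leave no room, giving the contradiction. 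The delicate bookkeeping is making the Ramsey colouring capture all pairs $(t, s)$ simultaneously and ensuring the structured family one extracts is large enough to contain a pair realising every small intersection size.
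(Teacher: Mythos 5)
There is a genuine gap at the point where you try to extract the final contradiction. Your Ramsey step produces a large family $\A$ of $p$-sets that are \emph{pairwise at the same intersection size} $t$ --- essentially a sunflower with core of size $t$ --- on which $|f(x)\cap f(y)|$ is constant, equal to some $s\neq t$. But such a family contains only pairs at intersection $t$: you cannot then ``choose $|x\cap y|$ freely among several values'' inside it, and for pairs outside $\A$ you have no control over $f$. The back-up argument you offer --- that the induced assignment $t\mapsto s$ on intersection sizes must hit the diagonal ``for size reasons'' --- is simply false: $t\mapsto t+1$ maps $\{0,\dotsc,p-1\}$ into $\{1,\dotsc,p\}$ and avoids the diagonal (note that $|f(x)\cap f(y)|=p$ is attainable for distinct $x,y$, since $f$ need not be injective), so no pigeonhole on the set of intersection sizes alone can close the proof. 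Ruling out such ``dilations'' is exactly the hard content of the theorem, and your sketch does not supply an argument for it.

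The paper avoids this by applying Ramsey to a different colouring and by inducting on $p$. Working first over $\N$, one normalises $f([p])=[p]$ and colours each $x$ by $f(x)\cap[p]$; Ramsey's theorem gives an infinite $M$ on which this colour is a constant $C\subset[p]$. If $C=\es$, any $x\in(M\sm[p])\tup{p}$ together with $[p]$ fixes an intersection; if $C=[p]$, then $f$ is constant on $M\tup{p}$, which a separate technical lemma shows is impossible for a nonconstant $f$ that fixes no intersection; otherwise one fixes $z\in M^{(|C|)}$, passes to $g(x)=f(x\cup z)\sm C$ on $(p-|C|)$-sets, and applies the induction hypothesis --- the fixed intersection found for $g$ lifts to one for $f$ because the common parts $z$ and $C$ contribute equally to both sides. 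A compactness argument then transfers the result from $\N$ to all sufficiently large finite $n$. To salvage your approach you would need the stabilised family to be of the form $M\tup{p}$ for a large set $M$ (so that every intersection size is realised \emph{within} the controlled family), and you would still need a substitute for the induction that actually forces the diagonal to be hit.
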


\begin{cor}
	The answer to Question \ref{qn:orient-part} is positive if $r$ is sufficiently large, given $p$.
\end{cor}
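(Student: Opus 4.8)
The plan is to obtain the corollary as the clean conjunction of the two results already in hand, namely Theorem \ref{thm:conjpart} and the reduction carried out in Section \ref{sec:hypergraphs}. Fix $p \ge 1$ and let $n_0 = n_0(p)$ be the threshold supplied by Theorem \ref{thm:conjpart}. For any $r \ge n_0$ that theorem tells us that $\sset{r}{p}$ has the fixed intersection property; but the statement that $\sset{r}{p}$ has the fixed intersection property is exactly the instance of Conjecture \ref{conj:main} for the pair $(p,r)$, which is precisely what the reduction of Section \ref{sec:hypergraphs} takes as its input. Feeding this instance into that reduction, with $n = r$, yields a positive answer to Question \ref{qn:orient-part} for this value of $r$; since this works for every $r \ge n_0$, the corollary follows, and one may take the corollary's implicit threshold to be $n_0(p)$ itself.

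To confirm that this is a genuine proof and not a mere juxtaposition of labels, I would recall the bridge between the two statements. The definition of \emph{fixing an intersection} compares $|f(x) \cap f(y)|$ with $|x \cap y|$, and the relevance to orientations is that an orientation of a single edge $e$ is nothing but an ordering of its $r$ vertices, which induces an \emph{intersection-preserving} bijection between the position-$p$-sets $\sset{r}{p}$ and the vertex-$p$-sets $e\tup{p}$. This is the content that Section \ref{sec:hypergraphs} exploits: from a hypothetical counterexample to Question \ref{qn:orient-part} (a partition of $V\tup{p}$ with no monochromatic edge but no good orientation) it extracts a nonconstant function $f \colon \sset{r}{p} \to \sset{r}{p}$ that fixes no intersection, which directly contradicts the fixed intersection property. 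It is worth noting that this mechanism is exactly the one responsible, in reverse, for the failure at $n = 2p$: the nonconstant non-fixing $f$ exhibited there is what powers the counterexample constructed elsewhere in the paper. Thus once $\sset{r}{p}$ has the property, the necessary condition of Question \ref{qn:orient-part} can indeed be upgraded to an actual good orientation.

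The main point to appreciate here is that the obstacle does \emph{not} lie in deriving the corollary, which is immediate once both inputs are assembled, but in the two inputs themselves: Theorem \ref{thm:conjpart}, whose proof via sparse Ramsey theory occupies Section \ref{sec:ramsey}, and the reduction of Section \ref{sec:hypergraphs}. The only genuine care required at this final step is the bookkeeping of quantifiers. The reduction needs the fixed intersection property for the \emph{single} value $n = r$ under consideration, whereas Theorem \ref{thm:conjpart} delivers it \emph{uniformly} for all $n \ge n_0$; there is therefore ample slack, and no delicate interplay between the two thresholds is needed. I would simply verify that the pair $(p,r)$ for which the reduction is invoked is the same pair for which Theorem \ref{thm:conjpart} guarantees the property, and that $r = n$ is the correspondence intended by the reduction, after which the conclusion is forced.
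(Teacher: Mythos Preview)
Your argument is correct and matches the paper's: the corollary follows immediately by combining Theorem~\ref{thm:conjpart} with Proposition~\ref{prop:partthenorient} from Section~\ref{sec:hypergraphs}, and indeed the paper offers no further proof beyond juxtaposing these two results. One factual correction to your side remarks: Theorem~\ref{thm:conjpart} is \emph{not} proved via sparse Ramsey theory but via the ordinary infinite Ramsey theorem together with a compactness argument (Section~\ref{sec:ramsey}); the sparse Ramsey machinery lives in Section~\ref{sec:sparseramsey} and is used for the \emph{counterexample} (Theorem~\ref{thm:nogen}), not for this positive direction.
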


What if $[n]\tup{p}$ does not have the fixed intersection property? The simplest such
case is $n = 4, p = 2$. The main part of this paper is devoted to showing how the trivial
failure of the fixed intersection property of $[4]\tup{2}$ can be `lifted' to a failure
of Question \ref{qn:orient-part} for the case $r = 4, p = 2$. Our work relies on a new version
of the `amalgamation' technique in sparse (structural) Ramsey theory.

\begin{restatable}{thm}{nogen} \label{thm:nogen}
	There is a $4$-uniform hypergraph $H$ satisfying:
	\begin{enumerate}[(a)]
		\item \label{it:nilhak} for every orientation of $H$ there is some pair of vertices
			$u,v$ such that $d_I(u,v) > 0$ for all $I \in [4]\tup{2}$;
		\item \label{it:sixpart} there is a partition of $V\tup{2}$ into six
			sets $V_1, \dotsc, V_6$ such that $e\tup{2}$ is not contained
            in a single $V_j$ for any edge $e$.
	\end{enumerate} 

	In particular, the answer to Question \ref{qn:orient-part} is negative for $r = 4, p = 2$.
\end{restatable}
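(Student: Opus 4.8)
The plan is to construct the $4$-uniform hypergraph $H$ by an amalgamation (partite construction) argument that forces property~(\ref{it:nilhak}) while keeping the vertex-pair structure sparse enough to admit the colouring in~(\ref{it:sixpart}). The starting point is the trivial failure of the fixed intersection property for $[4]^{(2)}$: the explicit nonconstant $f \colon [4]^{(2)} \to [4]^{(2)}$ built above (send $[2]$ and $\{3,4\}$ to $[2]$, everything else to $\{3,4\}$) has no pair $x \neq y$ with $|f(x) \cap f(y)| = |x \cap y|$. The key translation, which I would establish first (probably it is the content of Section~\ref{sec:hypergraphs} applied in reverse), is that an orientation of a $4$-uniform hypergraph avoiding a ``full'' pair $u,v$ (one with $d_I(u,v) > 0$ for all $I \in [4]^{(2)}$) is essentially the same data as, for each edge $e$, a choice of which $2$-subset of $e$ sits in which of the $\binom{4}{2}=6$ position-pairs, i.e.\ a bijection-like labelling; and forbidding a full pair globally amounts to finding a single function on all relevant $2$-sets that behaves like the $f$ above but \emph{consistently across overlapping edges}. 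So~(\ref{it:nilhak}) is exactly the statement that $H$ has no such global consistent labelling, and the obstruction we want to exploit is that consistency clashes with the $f$-pattern on some configuration.

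Concretely, I would set up a gadget: a small $4$-uniform hypergraph $G_0$ on which every orientation produces a full pair (this is a finite check, reflecting that ``locally'' around a few overlapping edges the $f$-pattern cannot be maintained). Then I would iterate the amalgamation: build a tower $H_0 \subset H_1 \subset \dots \subset H_m = H$ of ``partite'' hypergraphs. At stage $i$ we have a colouring target in mind (the six classes $V_1,\dots,V_6$ of pairs); we take many disjoint copies of the previous stage, lay them out in a multipartite skeleton indexed by the copies of $G_0$'s edges, and glue along copies of $G_0$ so that \emph{any} orientation of $H$, when restricted appropriately, must contain a monochromatic-in-the-bad-sense copy of $G_0$, hence a full pair. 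The standard partite-amalgamation bookkeeping (Nešetřil–Rödl style) guarantees that after finitely many rounds the ``colour'' that an orientation implicitly assigns to vertex-pairs is forced to realise the forbidden pattern on one copy of the gadget. Meanwhile, the multipartite structure keeps the pair-hypergraph (the ``$2$-shadow structure'' on $V^{(2)}$) sparse: no edge's $2$-shadow is ever trapped inside one part, which is precisely what we need for~(\ref{it:sixpart}).

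For~(\ref{it:sixpart}) itself, once $H$ is built I would exhibit the partition explicitly from the partite skeleton. Since each edge $e$ of $H$ has its four vertices spread across four distinct ``super-parts'' (this is maintained throughout the construction), the six pairs $e^{(2)}$ land in six \emph{distinct} pair-types determined by which super-parts the two endpoints come from; colouring $V^{(2)}$ by a suitable refinement of these super-part-pair types (and assigning leftover pairs arbitrarily) gives classes $V_1,\dots,V_6$ with no $e^{(2)} \subseteq V_j$. The ``in particular'' clause is then immediate: (\ref{it:sixpart}) says the partition hypothesis of Question~\ref{qn:orient-part} is satisfiable for this $H$, while~(\ref{it:nilhak}) says the desired conclusion of Question~\ref{qn:orient-part} fails, so the answer is negative for $r=4$, $p=2$.

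The main obstacle I anticipate is the amalgamation step: making the gluing genuinely force a bad copy of $G_0$ under \emph{every} orientation, not merely under colourings of a fixed finite set, because an orientation assigns labels to edges rather than colours to pairs, and the same pair can receive conflicting ``implied colours'' from different edges. Handling this requires choosing $G_0$ so that the local position-labellings it admits are rigid enough that a consistent global labelling would have to agree with an $f$-like map, combined with an amalgamation that iterates over the (finitely many) possible local behaviours so that one of them is pinned down. A secondary difficulty is quantitative: ensuring the construction terminates, i.e.\ that the number of possible ``types'' of partial orientation on the gadget is finite and strictly controlled at each round, so that the tower $H_0 \subset \dots \subset H_m$ has bounded height. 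Both are exactly the points where the ``new version of the amalgamation technique'' advertised in the abstract will have to do real work.
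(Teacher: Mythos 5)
There is a genuine gap, and it is the central idea of the proof. The paper does not amalgamate $4$-uniform hypergraphs against orientations at all; it first translates the whole problem into a statement about edge-colourings of an ordinary graph. One takes $H$ to be the family of $4$-cliques of a graph $G$. If $H$ fails property (a), then for \emph{each} pair $u,v$ one may choose some $I$ with $d_I(u,v)=0$; this choice is made per pair, not per hyperedge, so it is automatically a well-defined colouring $c\colon E(G)\to[4]^{(2)}$ with six colours --- the ``conflicting implied colours from different edges'' problem that you flag as the main obstacle simply does not arise once the colouring is extracted from the negation of (a) rather than from the orientation. The counting argument of Lemma~\ref{lem:orient-edge}, run in reverse on a single $4$-clique, then shows that $c$ restricted to every $4$-clique must fix an intersection; and the only colourings one needs to exclude are the monochromatic one and the ``special'' one (two independent edges in one colour, the other four in the opposite colour). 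Thus (a) reduces to: every $6$-edge-colouring of $G$ contains a monochromatic or special $K_4$; and (b) reduces to: $G$ is not $6$-edge-Ramsey for $K_4$ (non-edges of $G$ can be distributed arbitrarily among the six classes). This reduction, and the identification of the exact Ramsey statement to prove (Lemma~\ref{lem:main}), is what your proposal is missing; without it the amalgamation has no well-posed target.

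Your proposed architecture is also structurally off. A ``small gadget $G_0$ on which every orientation produces a full pair, verifiable by a finite check'' cannot exist in the role you assign it: any hypergraph with that property already satisfies (a) (degrees only increase in superhypergraphs), so the amalgamation would be needed only to secure (b) --- but gluing copies of $G_0$ together cannot destroy its full pairs, so $G_0$ itself would have to satisfy (b), and no small such object is available (the natural candidates, e.g.\ all $4$-cliques of a large complete graph via Ramsey's theorem, violate (b) maximally). In the correct construction the Ramsey property is the \emph{output} of the amalgamation, not a hypothesis on the base object: the paper builds $G$ by blowing up the vertices of an auxiliary graph $\hat G$ (chosen $3$-vertex-Ramsey for a graph that is $4$- but not $5$-edge-Ramsey for $K_4$) into $K_4$-free graphs that are $6$-edge-Ramsey for $K_3$, forcing every $6$-edge-colouring of $G$ to induce a \emph{total} colouring of $\hat G$, and then finds the monochromatic or special $K_4$ by a parity-of-opposite-colours argument. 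Finally, your plan for (b) via a $4$-transversal multipartite skeleton is unnecessary and unlikely to survive the gluing; in the paper (b) falls out immediately from $G$ not being $6$-edge-Ramsey for $K_4$.
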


One such hypergraph is constructed in Section \ref{sec:sparseramsey}. In Section \ref{sec:ordereddegrees}
we give a brief analysis of the `ordered' notion of degrees. Finally, in Section \ref{sec:openproblems}
we suggest some open problems.


\section{Degree theorems for hypergraphs} \label{sec:hypergraphs}

In this section we prove Theorem \ref{thm:hakforpsets} and give a positive answer to
Question~\ref{qn:orient-part} for the choices of $r$ and $p$ such that $[r]\tup{p}$
has the fixed intersection property.

We start with an easy proof of Theorem \ref{thm:hakforpsets}.

\hakforpsets*

\begin{proof}
	Construct a bipartite graph $H$ with vertex classes $X = E(G)$ and $Y = V(G)\tup{p}$.
	Join $e \in X$ and $A \in Y$ by an edge if $A \subset e$. In other words,
	we join each edge of $G$ to its $p$-subsets. Given an orientation $D(G)$ on $G$, let
	$S$ be the set of edges in $H$ of the form $eA$ where $A$ is the set of vertices
    in the first $p$ positions of $D(e)$, that is, $A = \{v_1, \dotsc, v_p\}$ where
	$D(e) = (v_1, \dotsc, v_r)$. Note that $S$ covers every element of $X$ exactly once and,
	conversely, every subset of $E(H)$ that covers every element of $X$ exactly once is
    given by some choice of $D(G)$.  Moreover, each $A \in Y$ is covered by $S$ exactly
    $d_{[p]}(A)$ times.

	Therefore by Hall's marriage theorem, $G$ has an orientation such that all 
	$A \in V(G)\tup{p}$ satisfy $d_{[p]}(A) \le k$ if and only if 
	$|W| \le k|\Gamma(W)|$ for all $W \subset X$, where $\Gamma(W) \subset Y$ is the
	neighbourhood of $W$ in $H$. This can be restated as $|W| \le k|U|$ for all $W \subset X$
	and $U \subset Y$ with the property that $\Gamma(W) \subset U$. Having chosen $U$, the
	largest $W$ such that $\Gamma(W) \subset U$ is $\{e\in E(G) \colon e\tup{p}
	\subset U\}$. Hence, this statement is equivalent to $k|U| \ge |\{e \in E(G) \colon 
	e\tup{p} \subset U\}|$ for all $U \subset V(G)\tup{p}$.
\end{proof}

Now we switch our attention to Question \ref{qn:orient-part}. We are trying to give
an $r$-uniform hypergraph $G$ an orientation such that for each $p$-set of vertices
\emph{some} $I$-degree is zero. In particular, if $e$ is an edge and $A \subset e$ is a
$p$-set, we require $d_{I_A}(A) = 0$ for some $I_A$. So we need a tool that would
give $e$ an orientation when for each $p$-set $A \subset e$ there is a $p$-set
$I_A \subset [r]$ such that the vertices $A$ cannot take exactly the positions labeled
by $I_A$. Of course, no such orientation exists
if all sets $I_A$ are equal (some $p$ vertices must be in positions labeled
by that set). However, if the sets $I_A$ are not all equal then it turns out that
such an orientation exists provided $[r]\tup{p}$ has the fixed intersection property.

\begin{lem} \label{lem:orient-edge}
    Let $1 \le p \le r$ be integers such that $[r]\tup{p}$ has the fixed intersection
    property and let $e$ be an $r$-set. For each $p$-set $A \subset e$ choose a $p$-set
    $I_A \subset [r]$. If the chosen $p$-sets $I_A$ are not all equal then
    the elements of $e$ can be ordered in such a way that the positions labeled by
    $I_A$ are not taken exactly by the elements of $A$ for any $p$-set $A \subset e$.
\end{lem}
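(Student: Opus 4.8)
The plan is to set up a function on $[r]^{(p)}$ and apply the fixed intersection property. Identify $e$ with $[r]$; an ordering of $e$ is then a permutation $\sigma \in S_r$, and the positions taken by a $p$-set $A \subset [r]$ are $\sigma(A) = \{\sigma(a) : a \in A\}$. So I want a permutation $\sigma$ with $\sigma(A) \ne I_A$ for every $p$-set $A$. First I would reformulate this as a statement about the function $f \colon [r]^{(p)} \to [r]^{(p)}$ defined by $f(A) = I_A$. The key observation is that if $\sigma$ were a permutation realising $\sigma(A) = f(A)$ for all $A$, then $\sigma$ would induce a bijection of $[r]^{(p)}$ that preserves all intersection sizes (since $|\sigma(A) \cap \sigma(B)| = |A \cap B|$ always), so $f$ would have to be that induced bijection; in particular $f$ would be injective and would ``fix every intersection''. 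We are told the $I_A$ are not all equal, so $f$ is nonconstant.

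Here is where I would use the hypothesis. Since $[r]^{(p)}$ has the fixed intersection property and $f$ is nonconstant, $f$ fixes some intersection: there are distinct $x, y \in [r]^{(p)}$ with $|f(x) \cap f(y)| = |x \cap y|$. But this alone is not obviously enough — I need to actually produce a permutation $\sigma$ avoiding \emph{all} the constraints $\sigma(A) = I_A$ simultaneously. I think the right move is contrapositive/counting: suppose for contradiction that \emph{every} permutation $\sigma$ violates the conclusion, i.e. for every $\sigma \in S_r$ there is some $p$-set $A$ with $\sigma(A) = I_A = f(A)$. I would then try to show this forces $f$ to be constant, contradicting the hypothesis via the fixed intersection property — wait, that is backwards. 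Let me reconsider: the cleaner route is to show directly that the \emph{existence} of a fixed intersection for $f$ is exactly the obstruction, and to build $\sigma$ greedily or by a direct combinatorial argument. Actually the intended argument is almost certainly the reverse reduction: the fixed intersection property is defined so that a nonconstant $f$ that does \emph{not} fix an intersection is the only way the lemma's conclusion can fail — because if $f$ fails to fix an intersection then $f$ is ``too rigid'' to be blocked, and if $f$ fixes an intersection we can perturb. So the main content is: \emph{if the conclusion of the lemma fails, then $f$ fixes no intersection} (hence $f$ is constant by the fixed intersection property, contradicting the assumption that the $I_A$ are not all equal).

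To prove that implication, suppose no valid ordering exists. The natural idea is to consider, for each transposition or each small modification of a candidate permutation, which $p$-set $A$ gets ``caught'' with $\sigma(A) = f(A)$, and extract from two different near-orderings a pair $x, y$ with $|f(x) \cap f(y)| = |x \cap y|$. Concretely, I would start from \emph{any} permutation $\sigma$; by assumption it is blocked by some $A$ with $\sigma(A) = f(A)$; then modify $\sigma$ slightly (e.g. swap two coordinates, one inside $A$ and one outside, or move a single element) to get $\sigma'$, which is blocked by some $A'$; comparing the constraints $\sigma(A) = f(A)$ and $\sigma'(A') = f(A')$ and using that $\sigma, \sigma'$ differ in a controlled way should yield $|A \cap A'| = |\sigma(A) \cap \sigma'(A')|$ up to bookkeeping, and since $\sigma(A) = f(A)$, $\sigma'(A') = f(A')$ this gives $|f(A) \cap f(A')| = |A \cap A'|$; after checking $A \ne A'$ (which is where I expect the fiddly case analysis — if $A = A'$ one has to argue the two constraints are incompatible, contradicting that both $\sigma, \sigma'$ were supposed to be blocked by it, or handle it with a different modification), this is a fixed intersection.

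The step I expect to be the main obstacle is precisely this last one: carefully choosing the family of perturbations of an ordering so that (i) every perturbation is still blocked, (ii) the blocking sets for two different perturbations can be compared to read off an intersection-size equality, and (iii) one can guarantee the two blocking sets are distinct. The ``not all $I_A$ equal'' hypothesis must enter here to rule out the degenerate scenario where everything is blocked by the same $A$ with the same target. I would handle this by a pigeonhole over a sufficiently rich set of permutations together with the observation that if all perturbations in my chosen family were blocked by the \emph{same} pair $(A, I_A)$, then $I_A$ would have to equal the image of $A$ under many different permutations, which is impossible once $r$ and the family are chosen appropriately — forcing genuine variation and hence a valid pair $x \ne y$.
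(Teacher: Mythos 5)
There is a genuine gap, and it lies in the direction of the implication you set out to prove. You correctly reduce to the nonconstant function $f(A)=I_A$ on $[r]\tup{p}$ and correctly note that the fixed intersection property hands you distinct $B,C$ with $|f(B)\cap f(C)|=|B\cap C|$. But you then declare that the main content is ``if the conclusion fails then $f$ fixes no intersection,'' while the perturbation argument you sketch is engineered to produce a pair $x\neq y$ with $|f(x)\cap f(y)|=|x\cap y|$ --- that is, to show that $f$ \emph{does} fix an intersection. That conclusion is already guaranteed by the hypothesis and contradicts nothing, so even if all the bookkeeping about blocked sets $A$ and $A'$ were carried out, the argument would not close. (Your earlier aside about a $\sigma$ realising $\sigma(A)=f(A)$ for \emph{all} $A$ is also a red herring: failure of the conclusion only means every $\sigma$ is blocked by \emph{some} $A$.)

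The missing idea is a counting step that converts a single fixed intersection into an unblocked permutation. For a uniformly random $\sigma\in S_r$ one has $\Pb\left[\sigma(A)=I_A\right]=1/\binom{r}{p}$ for each of the $\binom{r}{p}$ sets $A$, so
$\Pb\left[\exists A:\sigma(A)=I_A\right]\le\sum_A\Pb\left[\sigma(A)=I_A\right]=1$,
with equality if and only if the events are pairwise disjoint, i.e.\ if and only if every permutation is blocked by exactly one $A$. The fixed intersection $|I_B\cap I_C|=|B\cap C|$ is used precisely to exhibit one permutation $\tau$ with $\tau(B)=I_B$ and $\tau(C)=I_C$ simultaneously, so the events for $B$ and $C$ overlap, the union bound is strict, and some $\sigma$ avoids every constraint; this $\sigma$ is the required ordering. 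This is the paper's short probabilistic proof. Your proposal identifies the right function and the right hypothesis but never supplies the exact identity $\sum_A\Pb[\sigma(A)=I_A]=1$, which is what makes one overlap enough, and the perturbation scheme as stated points the implication the wrong way.
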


\begin{proof}
    Without loss of generality assume that $e = [r]$. Then we have a nonconstant function
    $\sset{r}{p} \to \sset{r}{p}$ that sends $A \in \sset{r}{p}$ to $I_A$.

    Let $\sigma$ be a permutation of $[r]$, chosen uniformly at random. For each $p$-set
    $A \subset [r]$, $\sigma(A) = I_A$ with probability $1/\binom{p}{r}$. Therefore
    \begin{align*}
        \Pb\left[ \sigma(A) = I_A \text{ for some $p$-set } A \subset [r] \right]
            &\le \sum_{A \in \sset{r}{p}} \Pb\left[ \sigma(A) = I_A \right] \\
            &= 1
    \end{align*}
    with equality if and only if the events $\sigma(A) = I_A$ are pairwise disjoint.

    Since $\sset{r}{p}$ has the fixed intersection property,
    $|I_{B} \cap I_{C}| = |B \cap C|$ for some distinct $p$-sets $B, C \subset [r]$.
    Hence there is a permutation $\tau$ of $[r]$ such that $\tau(B) = I_{B}$
    and $\tau(C) = I_C$, so the events $\sigma(B) = I_B$ and $\sigma(C) = I_C$
    are not disjoint, and so the above inequality is strict. Therefore there is
    a choice of $\sigma$ such that $\sigma(A) \neq I_A$ for any $A$. The required
    ordering of $e$ is $(\sigma(1), \dotsc, \sigma(r))$ for this choice of $\sigma$.
\end{proof}

This immediately gives a positive answer to Question \ref{qn:orient-part} for the case
when $\sset{r}{p}$ has the fixed intersection property.

\begin{prop} \label{prop:partthenorient}
    Let $1 \le p \le r$ be integers such that $\sset{r}{p}$ has the fixed intersection
    property and let $G$ be an $r$-uniform hypergraph. Suppose $V\tup{p}$
    is partitioned into sets $W_1, \dotsc, W_R$, where $R = \binom{r}{p}$, in such a way
    that for any $1 \le i \le R$ there are no edges $e$ such that $e\tup{p} \subset W_i$.
    Then $G$ has an orientation such that for any $p$-set of vertices $A$ there is some
    $p$-set $I \subset [r]$ such that $d_I(A) = 0$.
\end{prop}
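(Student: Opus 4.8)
The plan is to apply Lemma~\ref{lem:orient-edge} edge by edge. Fix once and for all a bijection $i \mapsto I_i$ between $\{1,\dotsc,R\}$ and $[r]\tup{p}$. For a $p$-set of vertices $A \in V\tup{p}$, let $W(A)$ be the block of the partition containing $A$, and let $I(A) \in [r]\tup{p}$ be the position set corresponding to that block under our bijection. The goal is to produce an orientation $D(G)$ such that for every edge $e$ and every $p$-set $A \subset e$, the vertices of $A$ do not occupy exactly the positions labelled by $I(A)$ in $D(e)$. Granting this, for each $A$ the choice $I = I(A)$ will witness $d_{I(A)}(A) = 0$, which is what we want.

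First I would orient each edge independently. Fix an edge $e$. For each $p$-set $A \subset e$ we have the chosen $p$-set $I_A := I(A) \subset [r]$. These $p$-sets are not all equal: if they were all equal to some $I_i$, then every $p$-subset of $e$ would lie in the block $W_i$, i.e. $e\tup{p} \subset W_i$, contradicting the hypothesis on the partition. Since $\sset{r}{p}$ has the fixed intersection property, Lemma~\ref{lem:orient-edge} applies and yields an ordering $D(e)$ of the vertices of $e$ such that for no $p$-set $A \subset e$ are the positions labelled by $I(A)$ taken exactly by the elements of $A$. Carrying this out for every edge of $G$ produces an orientation $D(G)$.

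It remains to verify that $D(G)$ has the required property. Let $A \in V\tup{p}$ be arbitrary and put $I = I(A)$. To see $d_I(A) = 0$, consider any edge $e$ and write $D(e) = (x_1,\dotsc,x_r)$; we must check that $\{x_j \colon j \in I\} \neq A$. If $A \not\subset e$ this is immediate, since $\{x_j \colon j \in I\} \subset e$. If $A \subset e$, it is exactly what was arranged in the previous paragraph. Hence no edge contributes to $d_I(A)$, so $d_{I(A)}(A) = 0$, completing the proof.

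Essentially all the work is hidden in Lemma~\ref{lem:orient-edge}; the only points requiring care are the two bookkeeping observations: that the partition hypothesis ``no edge has all its $p$-subsets in a single block'' is precisely the ``not all equal'' hypothesis needed to invoke the lemma on each edge, and that distinct edges may be oriented separately because $d_I(A)$ splits as a sum over edges of local contributions. I do not expect any genuine obstacle beyond getting this reduction stated cleanly.
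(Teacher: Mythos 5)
Your proof is correct and follows essentially the same route as the paper: relabel the partition blocks by $p$-subsets of $[r]$, apply Lemma~\ref{lem:orient-edge} to each edge (noting that the ``not all equal'' hypothesis is exactly the condition that no edge has all its $p$-subsets in one block), and observe that $I(A)$ is independent of the edge so $d_{I(A)}(A)=0$. The paper's proof is identical in substance, just more tersely stated.
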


\begin{proof}
    Relabel the sets $W_1, \dotsc, W_R$ with $p$-subsets of $[r]$ (instead of integers $1, \dotsc, R$).
    Given an edge $e$ of $G$, associate to each $p$-set $A \subset e$ the $p$-set $I_A \subset [r]$
    where $A \in W_{I_A}$. By assumption not all of these $p$-sets are equal, so Lemma \ref{lem:orient-edge}
    can be applied to get an orientation $D(e)$ that does not contribute to $d_{I_A}(A)$ for
    any $A$. Do this for every edge to get an orientation of $G$. For any $p$-set of vertices $A$,
    $I_A$ does not depend on the choice of $e$, and $d_{I_A}(A) = 0$. 
\end{proof}

Can the requirement for $\sset{r}{p}$ to have the fixed intersection property be removed?
As we shall see in Section \ref{sec:sparseramsey}, proposition would not be true for \mbox{$r = 4, p = 2$}
which is in fact the smallest case when $\sset{r}{p}$ does not have the fixed intersection
property. Therefore some condition on $r$ and $p$ must be imposed. It might be interesting to
know if some weaker condition would be enough.

On the other hand, this result can be strengthened to match the form of Theorem \ref{thm:alldeg}.
We fix an integer $k \ge 0$ and ask if $G$ can be given an orientation such that for every
$p$-set of vertices some $I$-degree is at most $k$. With Theorem \ref{thm:hakforpsets},
it is once again it is easy to find a sufficient condition.

\begin{prop}
    Let $k \ge 0$, $1 \le p \le r$ be integers and $G$ an $r$-uniform hypergraph. Suppose that
    $G$ has an orientation such that for each $p$-set of vertices $A$ there is some $p$-set
    $I \subset [r]$ such that $d_I(A) \le k$. Then $V\tup{p}$ can be partitioned into $R
    = \binom{r}{p}$ sets $W_1, \dotsc, W_R$ such that for each $j$ and each $U \subset W_j$
    at most $k|U|$ edges $e \in E(G)$ satisfy $e\tup{p} \subset U$.
\end{prop}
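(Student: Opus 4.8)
The plan is to read the desired partition directly off the given orientation. Write $D(G)$ for the orientation supplied by the hypothesis. For each $p$-set of vertices $A \in V\tup{p}$, choose (arbitrarily) a $p$-set $I_A \in [r]\tup{p}$ with $d_{I_A}(A) \le k$; such a choice exists by assumption. Now index the sought partition by the $p$-subsets of $[r]$ --- there are exactly $R = \binom{r}{p}$ of them --- and put $W_I := \{\, A \in V\tup{p} : I_A = I \,\}$ for each $I \in [r]\tup{p}$. The sets $W_I$ are pairwise disjoint and cover $V\tup{p}$ (some of them may be empty, which is permitted), so this is a partition of the required shape.

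It then remains to check the counting condition for each part, and this is a routine double count mirroring the necessary-condition discussion preceding Question~\ref{qn:orient-part}. Fix $I \in [r]\tup{p}$ and $U \subseteq W_I$. I would bound $\sum_{A \in U} d_I(A)$ from both sides. From above: every $A \in U$ lies in $W_I$, hence $I_A = I$ and so $d_I(A) = d_{I_A}(A) \le k$; summing gives $\sum_{A \in U} d_I(A) \le k|U|$. From below: for each edge $e$ with $e\tup{p} \subseteq U$, writing $D(e) = (x_1,\dotsc,x_r)$, the set $A_e := \{\, x_i : i \in I \,\}$ is a $p$-subset of $e$, so $A_e \in e\tup{p} \subseteq U$; thus $e$ is counted by $d_I(A_e)$ and therefore contributes (at least) $1$ to $\sum_{A\in U} d_I(A)$. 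Combining, the number of edges $e$ with $e\tup{p} \subseteq U$ is at most $\sum_{A \in U} d_I(A) \le k|U|$, which is exactly what the proposition asks for.

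I do not anticipate any real obstacle here: the statement is the straightforward ``dual'' of the sufficiency arguments already given (compare the proof of Theorem~\ref{thm:hakforpsets}). The only things to keep an eye on are bookkeeping details --- namely that $[r]\tup{p}$ supplies exactly $R$ indices so that the partition has the correct number of parts (allowing empty ones), and that in the lower bound each qualifying edge $e$ feeds into the single term $d_I(A_e)$, so there is no double counting to worry about.
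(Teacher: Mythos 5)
Your proof is correct and follows essentially the same route as the paper: both build the partition by assigning each $p$-set $A$ to a part indexed by a chosen $I$ with $d_I(A)\le k$, and then verify the counting condition by the standard double count (the paper simply delegates that count to the necessity direction of Theorem~\ref{thm:hakforpsets} applied to $G[W_I]$, which you carry out explicitly).
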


\begin{proof}
    Partition $V\tup{p}$ into $R$ disjoint sets $W_I, \, I \in \sset{r}{p},$ by putting
    each \mbox{$A \in V\tup{p}$} into $W_I$ such that $d_I(A) \le k$. For each $I$ apply
    Theorem \ref{thm:hakforpsets} to the induced subgraph $G[W_I]$.

    (To make the definition
    of induced subgraphs precise, given $W \subset V(G)\tup{p}$, $G[W]$ is the $r$-uniform
    hypergraph with vertex set $V(G)$ and edge set \linebreak \mbox{$\{e \in E(G) \colon e\tup{p} \subset W\}$}.
    Informally, we keep only those edges whose $p$-subsets are in $W$.)
\end{proof}

The argument used for the case $k = 0$ together with Theorem \ref{thm:hakforpsets} is enough
to prove sufficiency when $\sset{r}{p}$ has the fixed intersection property.

\begin{prop} \label{prop:partthenorient-genk}
    Let $k \ge 0, 1 \le p \le r$ be integers such that $\sset{r}{p}$ has the fixed intersection
    property and let $G$ be an $r$-uniform hypergraph. Suppose $V\tup{p}$ is partitioned
    into sets $W_1, \dotsc, W_R$, where $R = \binom{r}{p}$, in such a way that for each $j$
    and each $U \subset W_j$ at most $k|U|$ edges $e \in E(G)$ satisfy $e\tup{p} \subset U$.
    Then $G$ has an orientation such that for each $p$-set of vertices $A$ there is some $p$-set
    $I \subset [r]$ such that $d_I(A) \le k$.
\end{prop}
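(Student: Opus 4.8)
The plan is to mimic the proof of Proposition~\ref{prop:partthenorient} (the $k=0$ case), replacing the appeal to Lemma~\ref{lem:orient-edge} with an appeal to the full strength of Theorem~\ref{thm:hakforpsets}. First I would relabel $W_1,\dotsc,W_R$ by the $p$-subsets of $[r]$, writing them as $W_I$ for $I\in\sset{r}{p}$. The key idea is to decompose the orientation problem edge-by-edge over the ``type'' of each $p$-set: for each edge $e$ and each $p$-set $A\subset e$ we know $A$ lies in exactly one $W_{I_A}$, and, crucially, if all the $I_A$ (for $A\subset e$) were equal to a single $I$, then $e$ would witness $e\tup{p}\subset W_I$ with $U=e\tup{p}$, contradicting the hypothesis (since $|e\tup{p}|=\binom{r}{p}>k$ is \emph{not} automatic — so one must be slightly careful here, see below). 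In fact the hypothesis does allow edges with all $p$-subsets in one $W_I$ as long as not too many such edges pile up on any $U$, so I cannot orient purely edge-by-edge; instead I need a global argument.

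The cleaner route: for each $I\in\sset{r}{p}$, form the induced subhypergraph $G_I := G[W_I]$ (keeping only edges $e$ with $e\tup{p}\subset W_I$); by hypothesis $G_I$ satisfies, for every $U\subset W_I$, at most $k|U|$ edges $e$ have $e\tup{p}\subset U$, i.e.\ the condition of Theorem~\ref{thm:hakforpsets}, but applied with the $p$-sets restricted to lie in $W_I$. So Theorem~\ref{thm:hakforpsets} gives $G_I$ an orientation in which $d_{[p]}(A)\le k$ for all $A$; reading positions through a permutation sending $[p]\mapsto I$, this is an orientation of $G_I$ with $d_I(A)\le k$ for all $A$. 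Now I would handle the remaining edges --- those not lying in any single $W_I$ --- using Lemma~\ref{lem:orient-edge} exactly as in the $k=0$ proof: each such edge gets an orientation contributing $0$ to $d_{I_A}(A)$ for every $p$-set $A\subset e$. Combining, for any $p$-set of vertices $A$ pick $I=I_A$ with $A\in W_{I_A}$: the edges contributing to $d_{I_A}(A)$ are precisely edges of $G_{I_A}$ (an edge outside every $W_I$ contributes $0$ to every $d_{I_A}(A)$, and an edge inside $W_{J}$ for a single $J\neq I_A$ cannot have $A$ in positions $I_A$ since $A\in W_{I_A}$ would then need... --- actually this needs a line of care). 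So $d_{I_A}(A)$ equals its value in the orientation of $G_{I_A}$, which is $\le k$.

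The step I expect to be the main obstacle is verifying that the $I$-degree $d_{I_A}(A)$ in the full hypergraph $G$ really is computed entirely within $G_{I_A}$ --- that is, that no edge \emph{outside} $W_{I_A}$ can place $A$ in positions labelled $I_A$ in a way that inflates the degree. An edge $e$ that does not lie in a single $W_I$ was oriented via Lemma~\ref{lem:orient-edge} precisely so that $A$ does not occupy positions $I_A$, so it contributes $0$; good. An edge $e$ lying in a single $W_J$ with $J\neq I_A$: if $A\subset e$, then $A\in W_J$, but also $A\in W_{I_A}$, forcing $J=I_A$, contradiction; if $A\not\subset e$ then trivially $A$ cannot occupy any set of positions in $e$. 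So indeed all contributions to $d_{I_A}(A)$ come from $G_{I_A}$. The only genuinely fiddly point is the boundary case where $\sset{r}{p}$ being a single point is impossible since $p\le r$ and we may assume $r>p$ (if $r=p$ there is only one $p$-set $I$ and the statement degenerates; this case should be checked separately, and the fixed-intersection hypothesis together with $r=p$ would force the hypothesis to be vacuous unless $G$ has no edges). I would dispatch that degenerate case in one sentence and then present the main argument as above.

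\begin{proof}
    If $r=p$ there is a single set $W_1=V\tup{p}$, and the hypothesis (applied with $U=e\tup{p}=\{e\}$ for each edge $e$) forces $G$ to be edgeless provided $k=0$; for $k\ge 1$ any orientation works since $I=[r]$ is the only choice. So assume $r>p$.

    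Relabel $W_1,\dotsc,W_R$ with the $p$-subsets of $[r]$, writing them as $W_I$ for $I\in\sset{r}{p}$. For each $I$, let $G_I = G[W_I]$ be the induced subhypergraph with edge set $\{e\in E(G)\colon e\tup{p}\subset W_I\}$. For every $U\subset W_I\subset V\tup{p}$, the number of edges $e\in E(G_I)$ with $e\tup{p}\subset U$ is, by hypothesis, at most $k|U|$. Hence by Theorem~\ref{thm:hakforpsets} (applied to $G_I$, and formally to the ground set of $p$-sets $W_I$), $G_I$ has an orientation in which $d_{[p]}(A)\le k$ for all $A$. Fixing for each $I$ a permutation $\pi_I$ of $[r]$ with $\pi_I([p])=I$ and composing $D(e)$ with $\pi_I$, we obtain an orientation of $G_I$ in which $d_I(A)\le k$ for all $p$-sets $A$.

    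The edges of $G$ not lying in any single $W_I$ are oriented as in Lemma~\ref{lem:orient-edge}: for such an edge $e$, associating to each $p$-set $A\subset e$ the $p$-set $I_A$ with $A\in W_{I_A}$, the $I_A$ are not all equal, so $e$ can be oriented so as not to contribute to $d_{I_A}(A)$ for any $A\subset e$. This gives an orientation $D(G)$ of all of $G$.

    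Fix a $p$-set of vertices $A$, and let $I_A$ be the index with $A\in W_{I_A}$. We claim every edge contributing to $d_{I_A}(A)$ lies in $G_{I_A}$. Indeed, an edge $e$ outside every $W_I$ was oriented so that $A$ does not occupy positions $I_A$, contributing $0$. An edge $e$ lying in a single $W_J$: if $A\not\subset e$ it contributes $0$ trivially; if $A\subset e$ then $A\in W_J$, so $J=I_A$ and $e\in E(G_{I_A})$. Therefore $d_{I_A}(A)$ in $D(G)$ equals its value in the chosen orientation of $G_{I_A}$, which is at most $k$.
\end{proof}
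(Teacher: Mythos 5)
Your proof is correct and takes essentially the same approach as the paper: orient the edges lying inside a single $W_I$ by applying Theorem~\ref{thm:hakforpsets} to the induced subhypergraphs $G[W_I]$, then orient the remaining edges via Lemma~\ref{lem:orient-edge} exactly as in the $k=0$ case. The additional details you supply (the degenerate case $r=p$ and the check that $d_{I_A}(A)$ receives contributions only from $G[W_{I_A}]$) are sound; the paper simply leaves them implicit.
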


\begin{proof}
    Relabel the sets $W_1, \dotsc, W_R$ with $p$-subsets of $[r]$ (instead of integers $1, \dotsc, R$).
    By Theorem \ref{thm:hakforpsets} we can give orienations to the edges of the induced subgraphs
    $G[W_I]$, $I \in \sset{r}{p}$, in such a way that, only counting these edges,
    $d_I(A) \le k$ whenever $A \in W_I$. Now, same as in the proof of Proposition
    \ref{prop:partthenorient}, Lemma \ref{lem:orient-edge} allows us to give orientation
    to the remaining edges of $G$ without increasing $d_I(A)$ for any $A \in W_I$.
\end{proof}

\section{Fixed intersection property of $\sset{n}{p}$ for $n$ large} \label{sec:ramsey}

Here we prove Theorem \ref{thm:conjpart} which says that $\sset{n}{p}$ has the fixed
intersection property for pairs $(n, p)$ with $n$ much larger than $p$.
We start by extending the definitions given in the introduction to slightly greater
generality.

\begin{defn}
    Let $p \ge 1$ be an integer and $S, T$ sets. We say that a function $f \colon
    S\tup{p} \to T\tup{p}$ \emph{fixes an intersection} if there are distinct
    $x, y \in S\tup{p}$ such that $|f(x) \cap f(y)| = |x \cap y|$.

    Moreover, we say that $S\tup{p}$ has the \emph{fixed intersection property} if
    every nonconstant function $f \colon S\tup{p} \to S\tup{p}$ fixes an intersection.
\end{defn}

Here is a technical lemma.

\begin{lem} \label{lem:technical}
	Let $p \ge 1$ be an integer and $S \subset \N$. If $f \colon S\tup{p} \to \N\tup{p}$
	is a nonconstant function that is constant on $M\tup{p}$ for some $M \subset S$ with
	$|M| \ge 2p-1$ then $f$ fixes an intersection.
\end{lem}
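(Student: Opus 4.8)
\textbf{Proof plan for Lemma \ref{lem:technical}.}

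The plan is to exploit the large monochromatic clique $M$ inside $S$ on which $f$ is constant, say $f(w) = c$ for every $w \in M\tup{p}$. Since $f$ is nonconstant on $S\tup{p}$, there exists some $z \in S\tup{p}$ with $f(z) \neq c$. The idea is to pick a second argument $x \in M\tup{p}$ with $f(x) = c$ whose intersection size with $z$ can be made to equal $|c \cap f(z)|$; then $f$ fixes an intersection via the pair $(x, z)$ (which are distinct because $f$ takes different values on them). So the task reduces to: given a target integer $t := |c \cap f(z)|$ (which satisfies $0 \le t \le p$), find $x \in M\tup{p}$ with $|x \cap z| = t$.

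The key point is that $|M| \ge 2p - 1$ gives us enough room to realise every intersection size $t \in \{0, 1, \dots, p\}$ with $z$. First I would handle the easy direction: to get $|x \cap z| = t$ with $t \le |M \cap z|$, simply take $t$ elements of $M \cap z$ together with $p - t$ elements of $M \setminus z$; this is possible as long as $|M \setminus z| \ge p - t$, and since $|M| \ge 2p-1 \ge 2p - 1 - |M\cap z| + |M \cap z|$... more carefully: $|M \setminus z| = |M| - |M \cap z| \ge (2p-1) - p = p - 1 \ge p - t$ whenever $t \ge 1$, and when $t = 0$ we instead need $|M \setminus z| \ge p$, which needs a separate check. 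The cleanest route: note $|M \cap z| \le \min(p, |M|)$ and $|M \setminus z| = |M| - |M \cap z| \ge (2p-1) - p = p-1$; so intersection sizes $0, 1, \dots$ up to roughly $|M \cap z|$ are all attainable, but we must verify the full range $\{0, \dots, p\}$ is covered. If $|M \cap z| = p$ (i.e. $z \subseteq M$, but then $f(z) = c$, contradiction) this case cannot arise, so in fact $|M \cap z| \le p - 1$, which frees up $|M \setminus z| \ge p$ and lets us pick any $t \le |M \cap z|$ elements from $M \cap z$ plus a disjoint $(p-t)$-set from $M \setminus z$; this covers $t \in \{0, 1, \dots, |M \cap z|\}$. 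To reach larger values of $t$ we would need $z$ to have more elements inside $M$, which it does not, so in fact the only attainable values are $0, \dots, |M\cap z|$ — but that is a problem if $t > |M \cap z|$.

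The real fix is to not insist $x \subseteq M$: I would instead first choose, among all $z' \in S\tup{p}$ with $f(z') \neq c$, one that is "as close to $M$ as possible", or alternatively argue directly. The honest approach: among $p$-sets $z$ with $f(z) \ne c$, there is one with $|z \cap M|$ maximal; call it $z_0$ and let $m = |z_0 \cap M|$. If $m = p$ then $z_0 \subseteq M$ so $f(z_0) = c$, contradiction; hence $m \le p-1$. Now for the target $t = |c \cap f(z_0)| \in \{0, \dots, p\}$: if $t \le m$, realise it with $x \subseteq M$ as above (using $|M \setminus z_0| = |M| - m \ge 2p - 1 - (p-1) = p \ge p - t$). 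If $t > m$, swap roles: build $x$ by taking all $m$ elements of $z_0 \cap M$, then $p - 1 - m$ further elements of $M \setminus z_0$, and finally note we are one short — here instead consider the set $z_0$ itself versus a well-chosen $M\tup{p}$ element, or modify $z_0$ by swapping an element outside $M$ into $M$ to build an intermediate $p$-set and apply maximality of $m$. The main obstacle I anticipate is exactly this bookkeeping at the boundary $t > m$: one must argue that either a suitable $x \in M\tup{p}$ exists, or else one can find a different witness pair, and cleanly combining these cases while using the hypothesis $|M| \ge 2p-1$ tightly is the delicate part. Once all intersection sizes $0, \dots, p$ are shown reachable by some $x$ with $f(x) = c$ against some fixed $z$ with $f(z) \ne c$, the pair $(x,z)$ witnesses that $f$ fixes an intersection and we are done.
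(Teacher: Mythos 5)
Your overall strategy is the right one (and is essentially the engine of the paper's proof): against a set $z$ with $f(z)\neq c$, find a partner $y$ with $f(y)=c$ and $|y\cap z|=t$, where $t=|c\cap f(z)|\le p-1$. But your writeup stalls exactly where you say it does: when $t>m=|z_0\cap M|$. At that point no $x\in M\tup{p}$ can work, since every subset of $M$ meets $z_0$ in at most $m<t$ elements, so your first fallback (``consider $z_0$ versus a well-chosen $M\tup{p}$ element'') fails outright, and your second (swap an element of $z_0$ into $M$ and use maximality) is only a gesture. This is a genuine gap: the case $t>m$ is not a boundary nuisance but the heart of the lemma, and the hypothesis $|M|\ge 2p-1$ is needed precisely to get past it.

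The gap is fixable along either of two routes. Your maximality idea can be completed if you extract from it the right consequence: maximality of $m$ says that \emph{every} $p$-set $y$ with $|y\cap M|>m$ satisfies $f(y)=c$, not just those contained in $M$. Writing $z_0=A\cup B$ with $A=z_0\cap M$, build $y$ from $\min(t,m)$ elements of $A$, then (if $t>m$) $t-m$ elements of $B$, then $p-t$ elements of $M\sm A$; one checks $|y\cap z_0|=t$ and $|y\cap M|\ge m+1$ (using $t\le p-1$), and the elements of $M\sm A$ are available because $|M\sm A|\ge 2p-1-m\ge p$. Then $(y,z_0)$ fixes an intersection. The paper avoids the case split altogether by an iterative argument: assuming $f$ fixes no intersection, it takes $i_0=\min(S\sm M)$ and shows that every $x=x'\cup\{i_0\}$ with $x'\in M\tup{p-1}$ must satisfy $f(x)=c$ --- for such $x$ one has $|x\cap M|=p-1$, so the target $t\le p-1$ is always realisable inside $M$ (this is exactly your easy case) --- whence $f$ is constant on $(M\cup\{i_0\})\tup{p}$; iterating over $S$ forces $f$ to be constant, a contradiction. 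Either route works, but as written your proof is incomplete.
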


\begin{proof}
	Suppose for contradiction $f$ does not fix an intersection. Note that $S\sm~M \neq \es$
    as $f$ is nonconstant.

    Write $M_0 = M$ and $i_0 = \min(S\sm M_0)$. Suppose $x = x' \cup \{i_0\}$ where
    $x' \in M_0\tup{p-1}$. Then either $f(x) = f(y)$ or
    $|f(x) \cap f(y)| \le p-1$ for all $y \in M_0\tup{p}$. In the latter case, pick
    $y \in M_0\tup{p}$ such that $|y \cap x'| = |f(x) \cap f(y)|$. This is possible because
	$|M_0| \ge 2p-1$. Then $|f(x) \cap f(y)| = |x \cap y|$, so $f$ fixes an intersection,
	contradicting our assumption. Therefore for any $x$ as defined above and any 
    $y \in M_0\tup{p}$ we have $f(x) = f(y)$. This means that $f$ is constant on $M_1\tup{p}$
    where $M_1 = M_0 \cup \{i_0\}$.

    Now write $i_1 = \min(S\sm M_1)$ and repeat the argument to conclude that $f$ is constant
    on $M_2\tup{p}$ where $M_2 = M_1 \cup \{i_1\}$. Repeat it again to show that $f$ is
    constant on $M_n\tup{p}$ for every $n$ where
    $M_0 \subset M_1 \subset \dotsb$ are nested sets that cover $S$. But this
    implies $f$ is constant on $S\tup{p}$. Contradiction.
\end{proof}

We use this lemma to get the result for infinite domains.

\begin{lem} \label{lem:innat}
    For any positive integer $p$, $\N\tup{p}$ has the fixed intersection property.
\end{lem}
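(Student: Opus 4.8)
The plan is to reduce the infinite case to the finite lemma just proved (Lemma \ref{lem:technical}) by finding, inside any nonconstant $f \colon \N\tup{p} \to \N\tup{p}$, a large set $M$ on which $f$ is constant on $M\tup{p}$. Suppose for contradiction that $f$ is nonconstant but does not fix an intersection. Colour each $p$-set $x \in \N\tup{p}$ by the ``isomorphism type'' of how $f(x)$ sits relative to a fixed enumeration; more precisely, what I really want is a Ramsey-type statement producing an infinite $M \subset \N$ that is homogeneous enough to force $f$ to be constant on $M\tup{p}$. The natural tool is the infinite Ramsey theorem applied to a suitable finite colouring of $\N\tup{2p}$ (or $\N\tup{q}$ for an appropriate $q$): from each $2p$-set one can read off the pairwise intersection sizes $|f(x)\cap f(y)|$ for all pairs $x,y$ of $p$-subsets living inside it, and ``does not fix an intersection'' says each such intersection size differs from $|x \cap y|$.

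The key steps, in order, are as follows. First I would observe that it suffices to find \emph{any} $M \subset \N$ with $|M| \ge 2p-1$ (in fact $|M| = 2p$ is plenty, or even an infinite $M$) such that $f$ is constant on $M\tup{p}$: then Lemma \ref{lem:technical} applied with $S = \N$ gives the contradiction immediately, since $f$ is nonconstant on $\N\tup{p}$. Second, to produce such an $M$, I would set up a finite colouring: for a $(2p)$-set $Z = \{z_1 < z_2 < \dots < z_{2p}\} \subset \N$, colour $Z$ by the function that records, for every pair of $p$-subsets $x, y \subset Z$, the value $|f(x) \cap f(y)|$, together with the value $|f(x)\cap f(y) \cap \dots|$ data needed — actually the cleanest colouring is: colour $Z$ by the equivalence relation on $Z\tup{p}$ given by $x \sim y \iff f(x) = f(y)$, together with, for each $\sim$-class, enough information to pin down relative intersections. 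Since there are finitely many $p$-subsets of a $(2p)$-set, there are only finitely many such colours, so by the infinite Ramsey theorem there is an infinite $M \subset \N$ all of whose $(2p)$-subsets get the same colour.

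Third, I would argue that on this homogeneous infinite $M$, the function $f$ must in fact be constant on $M\tup{p}$. The point is that within $M$, by homogeneity the relation $x \sim y$ (i.e. $f(x)=f(y)$) depends only on the ``pattern'' of $x$ and $y$ inside their common $(2p)$-superset, hence only on $|x \cap y|$ (after checking that any two pairs with the same intersection size sitting inside $M$ can be embedded into a common $(2p)$-set in an order-isomorphic way, which works precisely because $|M|$ is large — here is where $|M| \ge 2p$ is used, exactly as in the proof of Lemma \ref{lem:technical}). If $f$ were not constant on $M\tup{p}$, then there would be a pair $x, y$ with $f(x) \ne f(y)$, so $|f(x) \cap f(y)| \le p-1$; picking (inside $M$) a pair $x', y'$ with $|x' \cap y'| = |f(x)\cap f(y)|$ — possible since $|M| \ge 2p-1$ — and transporting the non-fixing assumption via homogeneity yields $|f(x')\cap f(y')| = |x'\cap y'|$, contradicting that $f$ does not fix an intersection. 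So $f$ is constant on $M\tup{p}$, and we are done by the first step.

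The main obstacle I anticipate is getting the colouring right so that homogeneity genuinely forces the implication ``$f(x)=f(y)$ depends only on $|x\cap y|$'' on $M\tup{p}$: one has to be careful that the relevant invariant of a pair $(x,y)$ of $p$-subsets of $M$ is captured by a single $(2p)$-subset of $M$ containing both (which requires $|x \cup y| \le 2p$, always true) and that order-isomorphic configurations of such $(2p)$-sets in $M$ indeed receive the same colour and hence impose the same constraint. Once the colouring is set up so that this transfer works, the rest is a direct replay of the combinatorics already carried out in Lemma \ref{lem:technical}. An alternative, possibly slicker route avoids Ramsey entirely: mimic the proof of Lemma \ref{lem:technical} but start from an \emph{arbitrary} infinite $M_0 \subset \N$ with $|M_0| = \infty$ on which $f$ is constant — except no such $M_0$ is given a priori for the infinite domain, which is exactly why the Ramsey step (or some compactness argument) seems necessary to bootstrap the finite lemma up to $\N\tup{p}$.
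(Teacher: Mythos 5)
Your overall plan --- produce an infinite $M$ on which $f$ is constant on $M\tup{p}$ and then invoke Lemma \ref{lem:technical} with $S = \N$ --- would indeed finish the proof, but the Ramsey argument you give for producing such an $M$ does not work, and this is where all of the difficulty of the lemma lives. There are two problems. First, homogeneity of your colouring of $\N\tup{2p}$ only tells you that $|f(x)\cap f(y)|$ (and whether $f(x)=f(y)$) depends on the \emph{order-type} of the pair $(x,y)$ inside $M$, not merely on $|x\cap y|$: your parenthetical claim that any two pairs with the same intersection size can be embedded into a common $(2p)$-set ``in an order-isomorphic way'' is false --- already for $p=2$ the disjoint pairs $(\{m_1,m_2\},\{m_3,m_4\})$ and $(\{m_1,m_3\},\{m_2,m_4\})$, with $m_1<m_2<m_3<m_4$ in $M$, have different order-types and no re-embedding identifies them. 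Second, and more seriously, the final ``transport'' step is circular: to conclude $|f(x')\cap f(y')| = |f(x)\cap f(y)|$ from homogeneity you must take $(x',y')$ of the same pattern as $(x,y)$, which forces $|x'\cap y'| = |x\cap y|$, so you cannot simultaneously arrange $|x'\cap y'| = |f(x)\cap f(y)|$ unless $|x\cap y| = |f(x)\cap f(y)|$ already held --- which is exactly what you are trying to establish. Concretely, your argument does not exclude a homogeneous non-constant pattern such as (for $p=2$) ``$|f(x)\cap f(y)|=1$ whenever $x\cap y=\es$, and $f(x)\cap f(y)=\es$ whenever $|x\cap y|=1$'', which is non-fixing and stable under your transport step; ruling it out needs a separate idea (e.g.\ that a $2$-set cannot meet each of three pairwise disjoint $2$-sets). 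The step of Lemma \ref{lem:technical} that you are imitating works only because $f$ is already known to be constant on $M_0\tup{p}$, so that $|f(x)\cap f(y)|$ is independent of $y$ and $y$ can be varied freely to match intersection sizes; your homogeneity statement is too weak to substitute for that.

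The paper closes this gap by a different route: induction on $p$. After normalising $f([p])=[p]$, one colours each $x$ by $f(x)\cap[p]$ and applies Ramsey's theorem to get an infinite $M$ on which this colour is a constant $C$. The cases $C=\es$ and $C=[p]$ are handled directly (the latter via Lemma \ref{lem:technical}), and for $\es \subsetneq C \subsetneq [p]$ one passes to $g(x) = f(x\cup z)\sm C$ defined on $(p-|C|)$-sets and applies the induction hypothesis. You will need some such additional mechanism; a single application of Ramsey to the pattern colouring is not enough.
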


\begin{proof}
	Use induction on $p$. Clearly, theorem holds for $p = 1$ so suppose $p \ge 2$. 

	Let $f \colon \N\tup{p} \to \N\tup{p}$ be a nonconstant function.  Without losing generality
	assume that $f([p]) = [p]$. Now $c(x) = f(x) \cap [p]$ for $x \in \N\tup{p}$ defines a
	finite colouring of $\N\tup{p}$ where colours are subsets of $[p]$. By Ramsey's theorem
	there is an infinite set $M \subset \N$ such that $c$ is monochromatic on $M\tup{p}$.
	Say, $c(x) = C \subset [p]$ for all $x \in M\tup{p}$. 

	If $C = \es$ pick any $x \in (M \sm [p])\tup{p}$. Then $|x \cap [p]| = 0 = |f(x) \cap f([p])|$,
    so $f$ fixes an intersection.

	If $C = [p]$ then $f(x) = [p]$ for all $x \in M\tup{p}$ but this cannot happen
	by Lemma \ref{lem:technical}.

    Now suppose $C \neq \es, [p]$  and write $s = |C|$. For fixed $z \in M\tup{s}$ define
    $$
        g \colon (M \sm z)\tup{p-s} \to (\N \sm C)\tup{p-s}
    $$
    by $g(x) = f(x \cup z) \sm C$ for all
    $x \in (M \sm z)\tup{p-s}$.  As $f$ is not constant on $p$-sets of $M$ by Lemma \ref{lem:technical},
    there is a choice of $z$ for which $g$ is not constant. Then the induction hypothesis implies that
    $|g(x) \cap g(y)| = |x \cap y|$ for some distinct $x, y \in (M \sm z)\tup{p-s}$ and so
    $|f(x \cup z) \cap f(y \cup z)| = |g(x) \cap g(y)| + s = |x \cap y| + |z| = |(x \cup z) \cap (y \cup z)|$.
\end{proof}

A compactness argument extracts the result for finite domains.

\begin{cor} \label{cor:infrang}
	Let $p \ge 1$ be an integer. Then there is some $n\ge p+1$ such that every nonconstant
    function $f \colon [n]\tup{p} \to \N\tup{p}$ fixes an intersection.
\end{cor}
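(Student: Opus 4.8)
\medskip

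The plan is to deduce the finite statement from Lemma~\ref{lem:innat} by a standard compactness (König's lemma) argument. Suppose for contradiction that for every $n \ge p+1$ there is a nonconstant function $f_n \colon [n]\tup{p} \to \N\tup{p}$ that does not fix an intersection. The first thing to notice is that each $f_n$ has bounded range in a useful sense: since $f_n$ does not fix an intersection, in particular $|f_n(x) \cap f_n(y)| \neq |x \cap y|$ whenever $x \neq y$; taking $x, y$ disjoint forces $f_n(x) \cap f_n(y) \neq \es$, and more importantly one can bound how spread out the image can be. Concretely, I would first argue that we may assume $f_n$ takes values in $[N(n)]\tup{p}$ for some finite $N(n)$ (which is automatic since the domain is finite), and then — the key point — that after relabelling we may take all the $f_n$ to agree on a common nested family of domains.

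\medskip

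Here is the compactness step in more detail. For each $m \ge p+1$ consider the restrictions $f_n \restriction [m]\tup{p}$ for $n \ge m$; a priori these land in arbitrarily large ground sets, so to run König's lemma I first need to bound the ground set of the image uniformly in $n$ for fixed $m$. To do this, fix $m$ and note that the image $f_n([m]\tup{p})$ is a set of at most $\binom{m}{p}$ many $p$-sets, so it lives inside some ground set of size at most $p\binom{m}{p}$; by permuting $\N$ we may assume $f_n([m]\tup{p}) \subseteq [p\binom{m}{p}]\tup{p}$. Thus for each fixed $m$ there are only finitely many possibilities for the restricted-and-normalised function $f_n \restriction [m]\tup{p}$. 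Now build a tree whose level-$m$ nodes are those normalised restrictions $[m]\tup{p} \to \N\tup{p}$ that arise as $f_n \restriction [m]\tup{p}$ for infinitely many $n$ and that extend a level-$(m-1)$ node; this tree is finitely branching and infinite, so by König's lemma it has an infinite branch, which assembles into a single function $f \colon \N\tup{p} \to \N\tup{p}$ such that every finite restriction $f \restriction [m]\tup{p}$ equals $f_n \restriction [m]\tup{p}$ (up to the relabelling) for infinitely many $n$.

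\medskip

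Finally I would check that this limit $f$ inherits the two bad properties. It does not fix an intersection: any witnessing pair $x, y \in \N\tup{p}$ with $|f(x) \cap f(y)| = |x \cap y|$ lies in some $[m]\tup{p}$, hence the same relation holds for $f_n$ for infinitely many $n$, contradicting that those $f_n$ fix no intersection. And $f$ is nonconstant: here I need to be a little careful, because the relabellings in the compactness step could in principle collapse the image. The cleanest fix is to observe that a function $[n]\tup{p} \to \N\tup{p}$ that does not fix an intersection and is nonconstant must take at least (say) two values already on $[2p]\tup{p}$ — indeed if it were constant on $[2p]\tup{p}$ it would fix an intersection by the $|M| \ge 2p-1$ part of Lemma~\ref{lem:technical} applied with $S = [n]$, $M = [2p]$ — so one can choose the normalisation so that each $f_n$ takes two distinguished values $[p]$ and some fixed $B \neq [p]$ at two fixed $p$-sets in $[2p]\tup{p}$; then the limit $f$ also takes these two values and is nonconstant. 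Now $f \colon \N\tup{p} \to \N\tup{p}$ is nonconstant and fixes no intersection, contradicting Lemma~\ref{lem:innat}. Hence some finite $n$ works, and since the $f_n$ were arbitrary this is exactly the statement of the corollary.

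\medskip

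The main obstacle is the bookkeeping around relabelling: one must normalise the images into a fixed finite ground set (depending on $m$) to get finite branching, while simultaneously pinning down enough of the function — two prescribed values on a fixed finite part of the domain — so that nonconstancy survives passage to the limit. Everything else is routine König's-lemma compactness.
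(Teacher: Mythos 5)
Your proposal is correct and follows essentially the same route as the paper: a König's-lemma/diagonalisation compactness argument, normalising images into $[p\binom{m}{p}]\tup{p}$ by permuting $\N$, and securing nonconstancy of the limit via the $|M| \ge 2p-1$ case of Lemma~\ref{lem:technical} before invoking Lemma~\ref{lem:innat}. The extra normalisation you propose to pin down two distinguished values is harmless but unnecessary, since the relabellings are injective on $\N$ and so cannot collapse distinct values of $f_n$ on $[2p-1]\tup{p}$.
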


\begin{proof}
	Suppose not. Then for any integer $n \ge p+1$ there is a nonconstant function
	$f_n \colon [n]\tup{p} \to \N\tup{p}$ that does not fix an intersection. By reordering $\N$,
	if necessary, we can assume that $f_n([s]\tup{p}) \subset \left[p \binom{s}{p} \right]\tup{p}$
	for each $n \ge p+1$ and each $s \le n$.

	We define $f \colon \N\tup{p} \to \N\tup{p}$ as follows. For any $s$, there are only finitely
	many choices for $f_n$ on $[s]\tup{p}$ so, in particular, infinitely many $f_n$ agree on
	$[p]$. Define $f([p])$ to be $f_n([p])$ for any such $n$. Among these $f_n$, infinitely
	many agree on $[p+1]\tup{p}$. 	Define $f$ on $[p+1]\tup{p}$ to be the same as any such
	$f_n$. Among these $f_n$, infinitely many agree on $[p+2]\tup{p}$ and we define	$f$ on
	$[p+2]\tup{p}$ to agree with these $f_n$. Continue, at each step restricting to an infinite
	subsequence of the $f_n$. We get $f$ that, for every $N$, agrees with some $f_{n(N)}$ on
    $[N]\tup{p}$. In particular, $f$ is nonconstant since $f_{n(2p-1)}$ is nonconstant on $[2p-1]\tup{p}$
	by Lemma \ref{lem:technical}. By Lemma \ref{lem:innat}, $|f(x) \cap f(y)| = |x \cap y|$ for
	some distinct $x, y \in \N\tup{p}$. But $x, y \in [N]\tup{p}$ for some $N$ so $|f_{n(N)}(x)
	\cap f_{n(N)}(y)| = |x \cap y|$, hence $f_{n(N)}$ fixes an intersection. Contradiction.
\end{proof}

We get Theorem \ref{thm:conjpart} as an immediate corollary.

\conjpart*
\vspace{-24px}\qed 

\section{Sparse Ramsey type counterexample} \label{sec:sparseramsey}

\subsection{Overview}

In light of Conjecture \ref{conj:main} and Proposition \ref{prop:partthenorient}, we seek $p \ge 2$
such that Question \ref{qn:orient-part} has a negative answer when $r = 2p$. It turns out that $p = 2$ 
works. We recall the exact statement that we will prove.

\nogen*

The choice $p = 2$ allows us to consider the elements of $V(H)\tup{2}$ as edges and non-edges
of a graph on vertices $V(H)$. With this idea in mind we will deduce Theorem \ref{thm:nogen} from
a statement about graphs rather than hypergraphs.

\begin{defn}
	\label{defn:special}
	Take six colours and partition them into three pairs. Call two colours \emph{opposite}
	if they are in the same pair.

	Suppose $A$ is a $4$-clique of a graph whose edges are coloured with these six colours.
	We say that $A$ is \emph{special} if there is a pair of opposite colours $c_1, c_2$ such that
	two independent edges of $A$ have colour $c_1$ and the remaining four edges have colour $c_2$.
\end{defn}

\begin{lem}
	\label{lem:main}
	There is a graph $G$ satisfying:
	\begin{enumerate}[(a)]
		\item \label{it:nomono} edges of $G$ can be coloured with six colours without
			forming a monochromatic $4$-clique;
		\item \label{it:monoorspecial} whenever the edges of $G$ are coloured with six
			colours there is a monochromatic $4$-clique or a special $4$-clique.
	\end{enumerate}
\end{lem}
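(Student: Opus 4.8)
The plan is to build $G$ by a Ramsey-type amalgamation (a "partite amalgamation" in the style of Nešetřil–Rödl), starting from the observation that the trivial failure of the fixed intersection property for $\sset{4}{2}$ — the function $f$ with $f(x)=y$ if $x\in\{y,\bar y\}$ and $f(x)=\bar y$ otherwise — is exactly what a special $4$-clique encodes once we view $\sset{4}{2}$ as the edge-set of $K_4$ and the six colours as the six $2$-subsets of $[4]$. So the content of Lemma~\ref{lem:main} is genuinely a sparse Ramsey statement: we want a graph $G$ that arrows $K_4$ for $6$ colours in the weak sense that every $6$-colouring yields a monochromatic or special $K_4$, yet $G$ itself admits a $6$-colouring with no monochromatic $K_4$ (so the "special" alternative cannot be dropped).

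First I would set up the target more carefully. A $4$-clique $A$ with a $6$-colouring $\chi$ of its edges is \emph{bad} if it is neither monochromatic nor special; the three unordered pairs of opposite colours should be thought of as a perfect matching on the colour set, and "special" says $\chi$ restricted to $A$ is: one opposite pair $\{c_1,c_2\}$, a perfect matching in $c_1$ and its complementary $4$-cycle in $c_2$. I would first prove the \emph{finite} statement that there exists $N$ such that every $6$-colouring of $E(K_N)$ contains a monochromatic or special $K_4$ — equivalently, that one cannot $6$-colour a large enough complete graph avoiding both. This is the "dense" version and should follow from a direct Ramsey/stability argument: iterate Ramsey's theorem to find a large set on which the colouring is structured, then a short case analysis on how four points can be coloured shows that avoiding monochromatic and special forces a contradiction once $N$ is large. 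Part~\eqref{it:nomono} for $K_N$ is of course false (large complete graphs have monochromatic $K_4$ in $6$ colours by Ramsey), which is precisely why we must pass to a sparse $G$.

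Then the main work: construct a sparse $G$ with the two properties. I would use amalgamation over copies of $K_4$. Roughly: take the finite Ramsey number $N$ from the previous step, and build $G$ as a union of copies of $K_4$ glued along shared vertices/edges in a tree-like or partite pattern so that (i) $G$ has no "short" obstructions — concretely, every subgraph of $G$ on few vertices is close to a forest of $K_4$'s, which lets us $6$-colour $G$ with no monochromatic $K_4$ by colouring each $K_4$-block rainbow-ish and propagating, giving~\eqref{it:nomono}; and (ii) the amalgamation is rich enough that in any $6$-colouring, the colourings induced on the copies of $K_4$ realise enough patterns that a Hales–Jewett / partite-lemma style pigeonholing forces a monochromatic or special $K_4$ somewhere, giving~\eqref{it:monoorspecial}. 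The cleanest route is the partite construction: fix a large "pattern" complete $k$-partite-like skeleton on which the dense statement applies, build a partite hypergraph of copies of $K_4$ indexed by the $4$-subsets of the pattern's parts, and do a backwards induction (the Nešetřil–Rödl amalgamation step), at each stage amalgamating over all colourings of the already-built part so that one can "read off" a fixed colouring on a transversal $K_4$; the dense statement guarantees that fixed colouring is monochromatic or special.

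The hard part will be reconciling the two requirements: the amalgamation must be rich enough to force structure (property~\eqref{it:monoorspecial}) while remaining sparse enough to be $K_4$-mono-free (property~\eqref{it:nomono}). In standard sparse Ramsey constructions one gets $G\to(F)^e_c$ with $G$ having no $K_{\chi(F)+1}$ or bounded clique number, but here $F=K_4$ and we must avoid even monochromatic $K_4$ in $G$ under $6$ colours — so we cannot simply quote the Nešetřil–Rödl / Folkman theorem, which would give a monochromatic $K_4$ on the nose; we genuinely need the "special" escape hatch. The key new idea (the "new version of amalgamation" the paper advertises) is presumably to amalgamate over the quotient of $6$-colourings by the opposite-pair symmetry, or to track colourings only up to the structure that matters for specialness, so that the forced transversal $K_4$ lands in the "monochromatic or special" class rather than strictly monochromatic. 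I would therefore expect the crux to be: define the right notion of "type" of a coloured $K_4$ (monochromatic / special / bad), verify the dense statement that bad types cannot tile a large clique, and then carry out the amalgamation tracking these types — checking at each amalgamation step both that sparsity (hence $K_4$-mono-freeness) is preserved and that the induction genuinely forces a non-bad transversal copy. The bookkeeping for property~\eqref{it:nomono} — exhibiting an explicit mono-free $6$-colouring of the amalgamated $G$ — is the other place where care is needed, and I would handle it by colouring the blocks of the amalgamation consistently along the amalgamation tree.
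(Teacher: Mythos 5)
There is a genuine gap: the proposal is a research plan whose one concrete step would fail, and the construction that actually makes the lemma work is absent. Your proposed base case --- the ``dense'' statement that every $6$-colouring of $E(K_N)$ contains a monochromatic or special $K_4$ --- is, as you note, trivially true by Ramsey's theorem, but for that very reason it cannot anchor the amalgamation. If you run a standard partite construction whose transversal copies of $K_4$ have one vertex in each of four classes, then after amalgamating every pair of classes to be monochromatic, the reduced colouring of the pattern $K_t$ has a monochromatic $K_4$ (Ramsey again), and the corresponding transversal $K_4$ in $G$ is monochromatic on the nose. That makes \emph{every} $6$-colouring of $G$ produce a monochromatic $K_4$, i.e.\ it destroys property~(\ref{it:nomono}). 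You flag that one ``cannot simply quote Ne\v{s}et\v{r}il--R\"{o}dl'' and that some quotienting by the opposite-pair symmetry is needed, but you never identify the mechanism that actually produces a \emph{special} rather than monochromatic $K_4$, and ``track types through the amalgamation'' does not by itself resolve the tension.

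The missing idea in the paper's proof is that the relevant $4$-cliques must straddle an edge of a template graph $\hat{G}$ in a $2+2$ pattern, matching the $2+4$ edge structure of a special clique. Each vertex $v$ of $\hat{G}$ is blown up into a $K_4$-free graph $F_v$ that is $6$-edge-Ramsey for $K_3$ (Lemma~\ref{lem:e-vs-ks}), and Proposition~\ref{prop:technical} is applied edge by edge so that any $6$-colouring of the blowup yields monochromatic $e$-intermediate bipartite graphs for every edge $e$ of $\hat{G}$, plus a monochromatic triangle in each blob. One edge of a monochromatic triangle on each side of an edge of $\hat{G}$, together with the four intermediate edges between them, is a $K_4$ that is monochromatic if the colours clash and special if they are opposite. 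This reduces the lemma to a \emph{total}-colouring problem on $\hat{G}$: forbid a monochromatic edge-$K_4$, a vertex with an incident edge of the same colour, and an edge whose endpoints both carry the opposite colour. That problem is then solved by taking $H$ to be $4$- but not $5$-edge-Ramsey for $K_4$ and $\hat{G}$ to be $3$-vertex-Ramsey for $H$ but not $5$-edge-Ramsey for $K_4$ (Lemma~\ref{lem:v-vs-e}): identifying opposite colours gives three vertex colours, a vertex-monochromatic copy of $H$ uses only one opposite pair on its vertices, and either some edge of that copy lies in that pair (giving the second or third configuration) or its edges use only four colours (giving a monochromatic $K_4$ since $H$ is $4$-edge-Ramsey for $K_4$). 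Property~(\ref{it:nomono}) is then an explicit pullback of a $5$-colouring of $E(\hat{G})$ plus a sixth colour on vertices, with the ``not $5$-edge-Ramsey'' and ``$K_4$-free'' conditions threaded through every amalgamation step --- none of which is supplied by the ``colour each block rainbow-ish and propagate'' sketch in your proposal.
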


\begin{proof}[Proof of Theorem \ref{thm:nogen} (assuming Lemma \ref{lem:main})]
    Let $G$ be a graph as given by Lemma \ref{lem:main}. Form a $4$-uniform hypergraph $H$
    on the vertices of $G$ by taking the $4$-cliques of $G$ as its edges (so $V(H) = V(G)$ and
    $E(H) = \{ A \in V(G)\tup{4} \colon A\tup{2} \subset E(G) \}$).

    Property (\ref{it:nomono}) of Lemma \ref{lem:main} for $G$ directly implies property
    (\ref{it:sixpart}) of Theorem \ref{thm:nogen} for $H$.
    
    Suppose for contradiction $H$ does not have property (\ref{it:nilhak}) of
    Theorem \ref{thm:nogen}, that is $H$ has an orientation $D(H)$ such that for any
    pair of vertices $u,v$ there is some $I \in \sset{4}{2}$ such that $d_I(u,v) = 0$. 
    In particular, there is an edge colouring $c \colon E(G) \to \sset{4}{2}$
    such that $d_{c(e)}(e) = 0$ for all $e \in E(G)$. As is evident from the proof
    of Lemma \ref{lem:orient-edge} (or by a simple check), the induced colouring
    $V(A)\tup{2} \to \sset{4}{2}$ on any $4$-clique $A \subset G$ fixes an intersection.
    
    Partition the set of colours $\sset{4}{2}$ into three pairs, each consisting of two
    disjoint $2$-sets. Let these be the pairs of opposite colours. Property (\ref{it:monoorspecial})
    of Lemma \ref{lem:main} for $G$ implies that some $4$-clique $A \subset G$ is
    monochromatic or special under $c$. If we identify the vertices of $A$ with $1, 2, 3, 4$
    then on $A$ $c$ induces a mapping $c_A \colon \sset{4}{2} \to \sset{4}{2}$. If $A$ is
    monochromatic then this mapping is
    $$
    c_A(x) = \{1, 2\} \text{ for all } x \in \sset{4}{2}
    $$
    under some permutation of $1, 2, 3, 4$. If $A$ is special then under some permutation
    the mapping is
    $$
        c_A(x) =
        \begin{cases}
            \{1,2\} &\text{if } x = \{1,2\} \text{ or } x = \{3,4\} \\
            \{3,4\} &\text{otherwise}.
        \end{cases}
    $$
    In both cases $c_A$ does not fix an intersection, giving a contradiction.

    This finishes the proof.
\end{proof}

The proof of Lemma \ref{lem:main} is based on a new amalgamation method in sparse Ramsey theory.
Amalgamation (also known in literature as \emph{partite construction}) was introduced by \nandr{}
in \cite{nesetril79}. There have been many more applications of this technique. See, for example,
\cite{nesetril77, nesetril87} or \cite{leader-sparseramsey} for general introduction.

Here is a brief outline of the method. Suppose we wish to find a graph $G$ that is sparse (e.g., does
not contain a copy of some fixed graph)
but has the property that every colouring of its \emph{vertices} with $k$ colours contains a monochromatic
copy of some fixed graph $H$. We start with a sparse $t$-partite graph $\hat{G}$ with the property that if
its vertices are coloured so that every vertex class is monochromatic then there must be a monochromatic
copy of $H$. Then a larger $t$-partite graph $G$ is formed by taking many copies of $\hat{G}$ and
glueing them together in a specific way (this process usually consists of several steps). The goal
is to ensure that $G$ is as sparse as $\hat{G}$ and that any vertex colouring of $G$ contains a copy
of $\hat{G}$ with monochromatic vertex classes (and therefore contains a monochromatic copy of $H$).

A variant of this technique is useful in finding sparse $G$ with the property that every colouring
of its \emph{edges} with $k$ colours contains a monochromatic copy of $H$. The main difference is
that we start with a sparse $t$-partite graph with the property that if its edges are coloured so
that every pair of vertex classes spans a monochromatic bipartite graph then there must be a
monochromatic copy of $H$.

Our result is of slightly different type. The sparsity condition (\ref{it:nomono}) in Lemma \ref{lem:main}
is standard but the Ramsey property (\ref{it:monoorspecial}) is not. The difference is that instead of
requiring a monochromatic structure that is smaller than the forbidden structure we require a structure
of the same size but not necessarily monochromatic.

To adjust for this, instead of working with $t$-partite graphs we use graphs that are `almost'
$t$-partite. More specifically, vertices are partitioned into $t$ sets that span sparse rather than
empty subgraphs.

\subsection{Preparing for proof of Lemma \ref{lem:main}}

Let $G$ and $H$ be graphs and let $k$ be a positive integer. We say that $G$ is
\emph{$k$-edge-Ramsey for} $H$ if every colouring of the edges of $G$  with $k$ colours
contains a monochromatic copy (not necessarily induced) of $H$. Similarly, $G$ is
\emph{$k$-vertex-Ramsey for} $H$ if this holds for every colouring of the vertices of $G$
with $k$ colours. Note that $G$ containing $H$ as a subgraph is equivalent both to $G$ being
$1$-edge-Ramsey for $H$ and to $G$ being $1$-vertex-Ramsey for $H$.

Any $t$-partite graph (where $t$ is a fixed positive integer) is considered as having
its vertex classes listed in a fixed order. More formally, any such $G$ is a pair
$((V_i)_{i=1}^t, E)$ where $(V_i)_{i=1}^t$ is the list of vertex classes and $E$ is the set of
edges of $G$. We write $(V_i)$ to mean $(V_i)_{i=1}^t$ when there is no danger of confusion.
The order of vertex classes is respected when standard operations on graphs are carried out.
For example, 
\begin{itemize}
	\item if $G^\alpha = ((V_i^\alpha)_{i=1}^t, E^\alpha)$ are a family of $t$-partite graphs labeled by
        $\alpha$ then their union is
		$$
			\bigcup_{\alpha} G^\alpha = \left(\Big(\bigcup_\alpha V_i^\alpha\Big)_{i=1}^t,
			\bigcup_\alpha E^\alpha\right),
		$$
        provided $V_i^\alpha \cap V_j^\beta = \es$ when $i \neq j$. So the \ith{i} vertex class
        of the union of graphs is the union of \ith{i} vertex classes;
	\item if $G$ and $H$ are $t$-partite graphs then $G$ contains $H$ as a
        subgraph if there is an injective graph homomorphism $H \hookrightarrow G$ that maps each vertex
        class of $H$ into the corresponding vertex class of $G$;
	\begin{figure}[ht]
	\centering
	\includegraphics{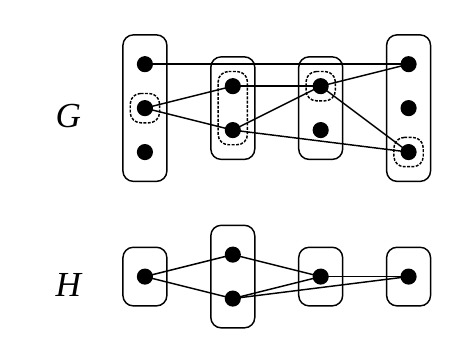}
	\caption{$G$ contains $H$.}
	\end{figure}
	\item if $G$ and $H$ are $t$-partite graphs then $G$ is a copy of $H$ if
        there is a graph isomorphism $H \hookrightarrow G$ that maps each vertex
        class of $H$ onto the corresponding vertex class of $G$.

\end{itemize}

Let $k$ be a fixed positive integer. Given a $t$-partite graph $G = ((V_i), E)$ for any
$j = 1, \dotsc, t$ we can construct a $t$-partite graph $\A_j(G)$, called the
\emph{\ith{j} amalgam of} $G$. The main property of $\A_j(G)$ is that whenever its vertices
are coloured with $k$ colours there must be a copy of $G$ with all vertices in $V_j$ of
the same colour.

We construct $\A_j(G)$ as follows. Let $W_j$ be any set with $|W_j| = k|V_j|$. For any
$A \in W_j^{(|V_j|)}$ let $G^A$ be a copy of $G$ whose \ith{j} vertex class is exactly the set $A$.
Choose these copies in such a way that they would intersect only at the \ith{j} vertex class
(so $G^A \cap G^B = A \cap B$ for any $A \neq B$).
Now define $\A_j(G) = \bigcup_{A \in W_j^{(|V_j|)}} G^A$ to be the union of all these
copies of $G$.
\begin{figure}[ht]
\centering
\includegraphics{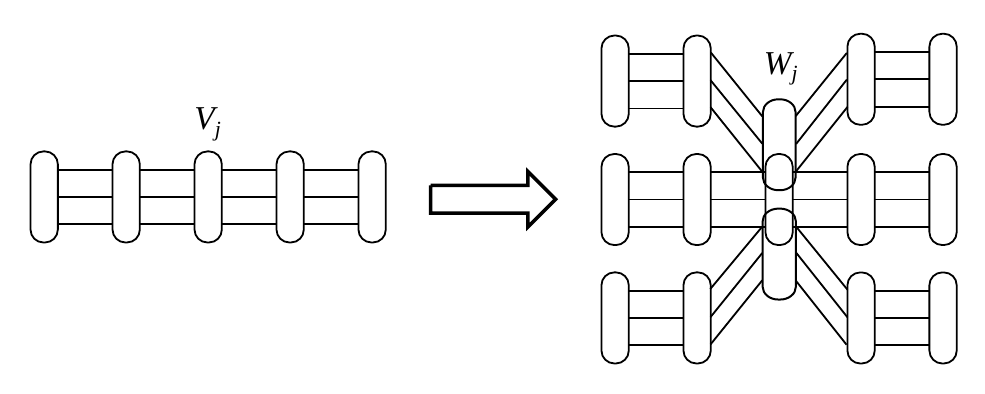}
\caption{Construction of the \ith{j} amalgam.}
\end{figure}

The pigeonhole principle implies that whenever the vertices of $\A_j(G)$ are coloured with
$k$ colours, some set $A \in W_j^{(|V_j|)}$ is monochromatic. Then $G^A$ is a copy
of $G$ in $\A_j(G)$ with the \ith{j} vertex class monochromatic. Furthermore,
it is not difficult to see that if for some $s$ there are no $s$-cliques in $G$ then there
are none in $\A_j(G)$ either. We will need a slightly stronger result.

\begin{prop} \label{prop:amal-noter}
	Let $c : G \to X$ be an edge colouring without a monochromatic copy of $K_s$. Then there is
	an edge colouring $\A_j(c) : \A_j(G) \to X$ without a monochromatic copy of $K_s$.
	In other words, if for fixed $k$ and $s$ $G$ is not $k$-edge-Ramsey for $K_s$ then neither
	is $\A_j(G)$.
\end{prop}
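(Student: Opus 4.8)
The idea is to colour the edges of $\A_j(G)$ copy-by-copy, but carefully enough that the colourings agree on the overlaps. Recall that $\A_j(G) = \bigcup_{A} G^A$ where the copies $G^A$ pairwise intersect only in their $j$-th vertex class, and that intersection contains no edges (since $G^A \cap G^B = A \cap B \subset V_j$, which is an independent set in each copy because $G$ is $t$-partite). So every edge of $\A_j(G)$ lies in exactly one copy $G^A$. This means I can define $\A_j(c)$ on each $G^A$ independently, using the isomorphism $G \cong G^A$ to transport the colouring $c$: set $\A_j(c)(e) = c(\phi_A^{-1}(e))$ where $\phi_A : G \to G^A$ is the fixed copy-isomorphism. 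Well-definedness is immediate from the fact that the copies are edge-disjoint.

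It then remains to check that $\A_j(c)$ has no monochromatic $K_s$. Suppose $K \subset \A_j(G)$ is a monochromatic copy of $K_s$. If all edges of $K$ lie in a single copy $G^A$, then $\phi_A^{-1}(K)$ is a monochromatic $K_s$ in $(G, c)$, contradicting the hypothesis. So I must rule out the possibility that the vertices of $K$ are spread across two or more copies. Since any two distinct copies meet only in $V_j$ (an independent set), if $K$ used vertices from two different copies $G^A$ and $G^B$, then any vertex of $K$ in $G^A \setminus G^B$ and any vertex in $G^B \setminus G^A$ would be non-adjacent in $\A_j(G)$ — but $K$ is a clique, so it cannot contain both. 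Hence all vertices of $K$ outside $V_j$ lie in a single copy, say $G^A$; and any vertex of $K$ in $V_j$ lies in $G^A$ as well (as $A$ is precisely $G^A$'s $j$-th class and $K$ has at most one vertex outside... more carefully: $K$ has $s \ge 2$ vertices, at least $s-1$ of which are not in $V_j$ — actually since $V_j$ is independent, $K$ has at most one vertex in $V_j$, so at least $s-1 \ge 1$ vertices of $K$ lie in some $G^A \setminus V_j$; the remaining at-most-one vertex, if in $V_j$, must be adjacent to those, forcing it into $G^A$ too since edges of $\A_j(G)$ only exist within copies). Therefore $K \subset G^A$, reducing to the previous case.

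The main point — and the only place any real care is needed — is the observation that the copies $G^A$ are pairwise \emph{edge}-disjoint and that cross-copy vertex pairs outside $V_j$ are non-edges; both follow directly from the $t$-partiteness of $G$ (which makes $V_j$ independent) and the construction $G^A \cap G^B = A \cap B$. Everything else is bookkeeping. The second displayed claim of the proposition (``if $G$ is not $k$-edge-Ramsey for $K_s$ then neither is $\A_j(G)$'') is just the contrapositive restatement and needs no separate argument. I do not expect a genuine obstacle here; the subtlety is purely in being precise about why a monochromatic clique cannot straddle two copies, which I would state as a short standalone observation before the main argument.
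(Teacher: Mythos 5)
Your proposal is correct and follows essentially the same route as the paper's proof: colour each edge by transporting $c$ through the unique copy $G^A$ containing it (well-defined since distinct copies meet only in the independent set $W_j$ and hence share no edges), then observe that a clique cannot straddle two copies because any pair of vertices lying in different copies and outside $W_j$ is a non-edge, so a monochromatic $K_s$ would have to sit inside a single $G^A$ and pull back to one in $(G,c)$. The paper phrases the straddling step by fixing a vertex $v_1\notin W_j$ and noting that any $v_r$ outside its copy would be non-adjacent to it, which is the same observation you make.
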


\begin{proof}
	Every edge $e$ of $\A_j(G)$ is an edge of $G^A$ for a unique $A \in W_j^{(|V_j|)}$.
	Let $\A_j(c)(e) = c(e)$ where on the right hand side $e$ is considered to be an edge
	of $G \cong G^A$. Suppose for contradiction $\A_j(c)$ produces a monochromatic copy
	of $K_s$ on vertices $v_1, \dotsc, v_s$. At most one of them is in $W_j$ as it is an
	independent set so we can assume $v_1 \not\in W_j$. Now $v_1 \in G^A$ for a unique $A$
	and we must have $v_r \not\in G^A$ for some $2 \le r \le s$ since otherwise the vertices
	$v_1, \dotsc, v_s$ would span a monochromatic copy of $K_s$ in $G^A \cong G$ under the
	colouring induced by $c$, contradicting our assumption on $c$. But then $v_1 v_r$ is not
	an edge of $\A_j(G)$ which is absurd.
\end{proof}

\begin{lem} \label{lem:v-vs-e}
	Let $b, s$ and $k$ be positive integers and suppose $H$ is a graph that is not
	$b$-edge-Ramsey for $K_s$. Then there is a graph $G$ such that
	\begin{enumerate}[(i)]
		\item $G$ is $k$-vertex-Ramsey for $H$, and
		\item $G$ is not $b$-edge-Ramsey for $K_s$.
	\end{enumerate}
\end{lem}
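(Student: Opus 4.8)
The plan is to build $G$ from $H$ by iterated amalgamation, following the classical partite-construction template but tracking the edge-colouring side condition. First I would fix the number of vertex classes: since $H$ is $k$-vertex-Ramsey for itself trivially (it contains itself), the real content is upgrading from "$1$-vertex-Ramsey" to "$k$-vertex-Ramsey" while preserving "not $b$-edge-Ramsey for $K_s$". Write $H$ as a $t$-partite graph for $t = |V(H)|$, one vertex per class (so every vertex colouring of $H$ with monochromatic classes is just a single-colour assignment to each vertex, which is vacuous, so I actually want the more honest setup: take a $t$-partite \emph{picture} $\hat G$ that is a disjoint union of copies of $H$ laid out across $t$ classes so that any colouring of its vertices making each class monochromatic yields a monochromatic copy of $H$ — the standard "partite lemma" input). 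The cleanest route is: let $\hat G$ be $t = |V(H)|$ copies of $H$ arranged so that the $i$-th copy has its vertices spread one-per-class with a cyclic shift, guaranteeing that a colouring constant on each class hits some copy monochromatically by pigeonhole on the $k$ colours; such a $\hat G$ is a disjoint union of copies of $H$, hence inherits from $H$ the property of not being $b$-edge-Ramsey for $K_s$ (colour each copy by the same good colouring of $H$, and no $K_s$ spans two components).

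Next I would run the amalgamation iteration. Starting from $G_0 = \hat G$, successively set $G_j = \A_j(G_{j-1})$ for $j = 1, \dots, t$, where $\A_j$ is the $j$-th amalgam defined in the excerpt (built with the parameter $k$, the number of vertex colours). Let $G = G_t$. The verification of (i) is the usual backwards induction over $j = t, t-1, \dots, 1$: given a $k$-colouring of $V(G)$, pigeonhole on the set $W_t$ produces a copy of $G_{t-1}$ inside $G_t$ whose $t$-th class is monochromatic; inside that copy, pigeonhole on $W_{t-1}$ gives a copy of $G_{t-2}$ with classes $t-1$ and $t$ both monochromatic; continuing, we arrive at a copy of $G_0 = \hat G$ with \emph{every} class monochromatic, and by the design of $\hat G$ this contains a monochromatic copy of $H$. (Here one uses that each $\A_j$ operation only merges vertices within class $j$, so monochromaticity of the classes already fixed at earlier stages is not disturbed — this is exactly why the amalgams must be performed in a fixed order, and it is the one place to be careful.) For (ii), I would apply Proposition \ref{prop:amal-noter} repeatedly with the parameter set to $b$ (not $k$!): since $G_0 = \hat G$ is not $b$-edge-Ramsey for $K_s$, fix a $b$-edge-colouring $c_0$ of $G_0$ with no monochromatic $K_s$; then $\A_j(c_0)$-type colourings lift it stage by stage, $c_j = \A_j(c_{j-1})$, and Proposition \ref{prop:amal-noter} guarantees $c_j$ has no monochromatic $K_s$; after $t$ steps $c_t$ witnesses that $G = G_t$ is not $b$-edge-Ramsey for $K_s$.

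The main obstacle — and the point deserving the most care — is the interaction between the two colour parameters $k$ and $b$: the amalgams $\A_j$ must be constructed with $k$ (so that the vertex-Ramsey pigeonhole in part (i) works), while the edge-colouring argument in part (ii) uses Proposition \ref{prop:amal-noter} which is stated for \emph{a fixed} $c$ and makes no reference to the number of colours in the amalgam's construction, so it applies verbatim with the $b$-colouring $c_0$. Concretely I should check that Proposition \ref{prop:amal-noter}, as proved in the excerpt, really only uses the combinatorial structure of $\A_j(G)$ (namely that each edge lies in a unique $G^A$ and that an independent set meets $W_j$ at most once) and not the value $k$ used to size $W_j$ — it does, so there is no clash. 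A secondary, routine obstacle is nailing down the input graph $\hat G$: I must make sure that "not $b$-edge-Ramsey for $K_s$" is genuinely inherited by the disjoint union of copies of $H$, which is immediate since $K_s$ is connected (for $s \ge 2$; the degenerate cases $s \le 1$ can be dismissed since then no graph avoids a monochromatic $K_s$ and the hypothesis "$H$ is not $b$-edge-Ramsey for $K_s$" already fails). Finally, one should record that $G$ is finite whenever $H$ is, since each $\A_j$ replaces a finite graph by $k$ times as many vertices in class $j$ plus copies of the rest — all finite.
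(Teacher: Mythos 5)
Your overall architecture is the same as the paper's: build a $t$-partite base graph $G_0$ out of vertex-disjoint copies of $H$ (so that it inherits ``not $b$-edge-Ramsey for $K_s$''), then apply the amalgams $\A_1, \dotsc, \A_t$ in order, using Proposition \ref{prop:amal-noter} for part (ii) and the backwards pigeonhole induction for part (i). Your observations that Proposition \ref{prop:amal-noter} is indifferent to the number of colours used to size $W_j$, and that a disjoint union of copies of $H$ inherits the non-Ramsey property because $K_s$ is connected, are both correct and are exactly what the paper relies on.

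However, your concrete construction of the base graph is wrong, and this is a genuine gap. You take $t = |V(H)|$ classes and lay out each copy of $H$ with one vertex in every class (cyclically shifted). Then every copy of $H$ meets all $t$ classes, so a vertex colouring that is constant on each class makes a copy of $H$ monochromatic only if \emph{all} the classes receive the same colour --- which certainly need not happen when $k \ge 2$ (colour the classes with two alternating colours). The cyclic shifts change nothing, and there is no pigeonhole to invoke: with only $|V(H)|$ classes and $k$ colours you cannot force $|V(H)|$ classes to agree. The paper instead takes $t = k|H|$ classes and includes, for \emph{every} $|H|$-subset of the classes, a copy of $H$ whose vertices lie one per class within exactly those classes. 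Then any class-constant $k$-colouring gives, by pigeonhole, some $|H|$ classes of a common colour, and the copy of $H$ assigned to that subset is monochromatic; this base graph is still a disjoint union of copies of $H$, so it is still not $b$-edge-Ramsey for $K_s$. With this corrected choice of $G_0$ the remainder of your argument goes through verbatim.
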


\begin{proof}
	Define $t = k|H|$ and take a $t$-partite graph $G_0$ that is a union of vertex
	disjoint copies of $H$ such that for any choice of $|H|$ vertex classes of
	$G_0$ some copy of $H$ intersects all of them. Note that if the vertices of $G_0$ are
	coloured with $k$ colours and each vertex class is monochromatic
	then there must be a monochromatic copy of $H$.
    Since $H$ is not $b$-edge-Ramsey for $K_s$, neither is $G_0$.

	Take $G = \A_t(\A_{t-1}(\dotsb\A_1(G_0)\dotsb))$. By Proposition \ref{prop:amal-noter},
	$G$ is not $b$-edge-Ramsey for $K_s$. However, suppose the vertices of $G$ are coloured
	with $k$ colours. From previous observations we know that $G$ contains a copy $G'$ of \linebreak
	$\A_{t-1}(\A_{t-2} (\dotsb\A_1(G_0)\dotsb))$ with the \ith{t} vertex class monochromatic.
	Moreover, $G'$ is $k$-vertex-coloured so contains a copy $G''$ of
	$\A_{t-2}(\A_{t-3}(\dotsb\A_1(G_0)\dotsb))$ with the $(t-1)^\text{\tiny st}$ vertex class
	monochromatic (so in fact, both the \ith{t} and the $(t-1)^\text{\tiny st}$ vertex classes
	of $G''$ are monochromatic). Repeat this argument to deduce that $G$ contains a copy of
	$G_0$ with all vertex classes monochromatic. Our construction of $G_0$ now ensures that
	there is a monochromatic copy of $H$.
\end{proof}

A special case is particularly useful.

\begin{cor} \label{cor:v}
	Let $s$ and $k$ be positive integers and suppose $H$ is a graph that contains no copy
	of $K_s$. Then there is a graph $G$ such that
	\begin{enumerate}[(i)]
		\item $G$ is $k$-vertex-Ramsey for H, and
		\item $G$ does not contain $K_s$ as a subgraph.
	\end{enumerate}
\end{cor}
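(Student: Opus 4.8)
The plan is to derive this statement as the special case $b = 1$ of Lemma \ref{lem:v-vs-e}. The key observation I would use is that for any graph $F$, the notions ``$F$ contains $K_s$ as a subgraph'' and ``$F$ is $1$-edge-Ramsey for $K_s$'' coincide: with only one colour available every edge colouring of $F$ is constant, so a monochromatic copy of $K_s$ is simply a copy of $K_s$. This was already remarked right after the definition of $k$-edge-Ramsey. Consequently the hypothesis that $H$ contains no copy of $K_s$ is exactly the statement that $H$ is not $1$-edge-Ramsey for $K_s$, so the input to Lemma \ref{lem:v-vs-e} is available.

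First I would invoke Lemma \ref{lem:v-vs-e} with $b = 1$ and with the given $s$, $k$, and $H$. The lemma then produces a graph $G$ that is $k$-vertex-Ramsey for $H$ and not $1$-edge-Ramsey for $K_s$. Conclusion (i) of the corollary is immediate, and by the equivalence above ``not $1$-edge-Ramsey for $K_s$'' says precisely that $G$ contains no copy of $K_s$, which is conclusion (ii). There is essentially no obstacle: the only thing to verify is the translation between ``not $1$-edge-Ramsey for $K_s$'' and ``$K_s$-free'', which is routine. (Concretely, the $G$ obtained is the iterated amalgam $\A_t(\A_{t-1}(\cdots\A_1(G_0)\cdots))$ constructed in the proof of Lemma \ref{lem:v-vs-e}, with $t = k|H|$ and the amalgams taken with colour parameter $k$.)
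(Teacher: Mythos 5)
Your proof is correct and is exactly the paper's argument: the corollary is obtained by applying Lemma \ref{lem:v-vs-e} with $b=1$, using the observation (already noted in the paper) that containing $K_s$ is the same as being $1$-edge-Ramsey for $K_s$.
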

\begin{proof}
	This is Lemma \ref{lem:v-vs-e} with $b = 1$.
\end{proof}

A slightly more involved application of the method produces the following result. It is folklore,
but for completeness we outline its proof as described in \cite{leader-sparseramsey}.

\begin{lem} \label{lem:e-vs-ks}
	Let $s$ and $k$ be positive integers and suppose $H$ is a graph that does not contain a
	copy of $K_s$. Then there is a graph $G$ such that
	\begin{enumerate}[(i)]
		\item $G$ is $k$-edge-Ramsey for $H$, and
		\item $G$ does not contain $K_s$ as a subgraph.
	\end{enumerate}
\end{lem}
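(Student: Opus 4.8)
\textbf{Proof plan for Lemma \ref{lem:e-vs-ks}.}

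The plan is to run the edge-variant of the partite/amalgamation construction, mirroring the vertex-variant used in Lemma \ref{lem:v-vs-e}, but with the amalgamation step performed on \emph{pairs} of vertex classes rather than single classes. First I would set up the starting object: let $t = \binom{k|H|}{2}$-ish, or more precisely take $t$ large enough that a $t$-partite graph $G_0$, built as a vertex-disjoint union of copies of $H$, can be arranged so that for every choice of $|H|$ vertex classes some copy of $H$ meets all of them, with each copy placed so that it uses at most one vertex from each class. The key feature we want from $G_0$ is: if the edges of $G_0$ are coloured with $k$ colours in such a way that every pair of vertex classes $(V_a, V_b)$ spans a monochromatic bipartite graph, then there is a monochromatic copy of $H$. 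Since $H$ is $K_s$-free, so is $G_0$.

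Next I would introduce the edge-amalgam. For an ordered pair $(a,b)$ of distinct indices in $[t]$ and a $t$-partite graph $G$ with classes $(V_i)$, form $\A_{a,b}(G)$ by taking $k$ copies of $V_a$ and $k$ copies of $V_b$ — or rather, index copies of $G$ by the colourings of the relevant bipartite ``palette'' — and glue copies of $G$ together along the pair of classes $(V_a, V_b)$ so that any two copies intersect only inside $V_a \cup V_b$, and moreover agree there exactly. The pigeonhole point is that if the edges of $\A_{a,b}(G)$ get a $k$-colouring, then (since the bipartite graph between the $a$-th and $b$-th classes is finite and there are $k$ colours) some copy of $G$ sits inside $\A_{a,b}(G)$ with \emph{all} edges between its $a$-th and $b$-th classes the same colour. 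Then iterate $\A_{a,b}$ over all $\binom{t}{2}$ pairs $(a,b)$, in some fixed order, starting from $G_0$; call the result $G$. Exactly as in Lemma \ref{lem:v-vs-e}, peeling off the amalgams one at a time shows that any $k$-edge-colouring of $G$ yields a copy of $G_0$ in which every pair of vertex classes spans a monochromatic bipartite graph, hence a monochromatic copy of $H$; this gives (i).

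For (ii) I would prove the analogue of Proposition \ref{prop:amal-noter} for the edge-amalgam: if $c\colon G \to X$ has no monochromatic $K_s$, then the ``pulled-back'' colouring on $\A_{a,b}(G)$ — colour each edge $e$ by $c$ of its image in the unique copy $G^\gamma$ containing it — has no monochromatic $K_s$ either. The argument is the same shape: a would-be monochromatic $K_s$ in $\A_{a,b}(G)$ can have at most one vertex in $V_a$ and at most one in $V_b$ (each class is independent in $G$ hence in the amalgam), so at least $s-2$ of its vertices lie outside $V_a \cup V_b$ and thus in a single fixed copy $G^\gamma$; if all $s$ vertices were in $G^\gamma$ we'd get a monochromatic $K_s$ in $G$, contradiction, and if one vertex $v$ is outside $G^\gamma$ then $v$ has no neighbour in $V_i$ for $i \notin \{a,b\}$ among the vertices of $G^\gamma$, so the edge joining $v$ to one of those $s-2$ outside vertices is absent — absurd. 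Starting from the edge-colouring of $G_0$ witnessing its $K_s$-freeness (a proper colouring of its edges by, say, the identity, trivially has no monochromatic $K_s$ since $G_0$ has none at all) and applying this proposition through every amalgamation step, we conclude $G$ is $K_s$-free, giving (ii).

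The main obstacle I anticipate is bookkeeping in the amalgam construction rather than any genuine mathematical difficulty: one must set up $\A_{a,b}$ so that the copies of $G$ are glued \emph{consistently} along the pair $(V_a,V_b)$ — i.e. deciding exactly which data indexes the copies so that the pigeonhole step genuinely forces a \emph{fully} monochromatic bipartite graph between the two classes (not just monochromatic ``in a useful way''), and checking that earlier amalgams are not disturbed by later ones (the class structure must be preserved, and a copy produced at a late stage must still be a copy of the graph with all its earlier bipartite-monochromatic guarantees intact). Since the paper only asks for an outline here — this lemma is folklore and cited to \cite{leader-sparseramsey} — I would present the construction and the two propositions at the level of detail above and leave the fully explicit indexing to the reader, exactly as is standard for partite-construction arguments.
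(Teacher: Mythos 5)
Your overall architecture (a $t$-partite $G_0$ built from copies of $H$, followed by an amalgamation over all pairs of vertex classes, plus a $K_s$-freeness-preservation proposition) matches the paper's, but the heart of the edge-amalgamation step is wrong, and it is not ``bookkeeping''. You glue copies of $G$ so that they \emph{agree exactly} on $V_a \cup V_b$ and then invoke pigeonhole to find a copy whose $(V_a,V_b)$-bipartite graph is monochromatic. This cannot work: if all copies share the same bipartite graph between the two classes, they all see the same edge-colouring of it, and a $k$-colouring of the edges of a fixed finite bipartite graph has no reason to be monochromatic. Unlike the vertex amalgam, where pigeonhole on a set of size $k|V_j|$ genuinely produces a monochromatic $|V_j|$-subset, there is no elementary pigeonhole that produces a monochromatic copy of a whole bipartite graph inside a slightly larger one. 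What is actually needed — and what the paper supplies — is a bipartite graph $G_{i,j}^\ast$ with the property that every $k$-colouring of its edges contains a monochromatic \emph{induced} copy of $G_{i,j}$ respecting the two sides; the copies of $G_\alpha$ are then glued onto the (many, mutually distinct) induced copies of $G_{i,j}$ inside $G_{i,j}^\ast$, not onto a single shared bipartite graph. The existence of $G_{i,j}^\ast$ is a genuine Ramsey-type theorem, which the paper derives from the Hales--Jewett theorem via the product construction $X = V_i^n$, $Y = V_j^n$ with edge set $F^n$. This is the key idea your proposal is missing.

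A consequence of the same omission: once the copies no longer agree on $V_a \cup V_b$, your $K_s$-freeness argument needs the copies of $G_{i,j}$ inside $G_{i,j}^\ast$ to be \emph{induced}. A potential $K_s$ in the amalgam with one vertex $u$ in the $a$-th side and one vertex $v$ in the $b$-th side uses the edge $uv$ of $G_{i,j}^\ast$; only if the copy $A \ni u,v$ is induced can you conclude $uv \in E(A)$ and hence pull the clique back into a single copy of $G$. (The paper flags this: ``taking induced copies of $G_{i,j}$ in the previous step was crucial''.) Secondary point: your choice of $t$ should be the Ramsey number $R_k(|H|)$, since the reason a $G_0$ with all class-pairs monochromatic contains a monochromatic $H$ is Ramsey's theorem applied to the induced colouring of the $\binom{t}{2}$ pairs of classes; ``$\binom{k|H|}{2}$-ish'' is not the right quantity and you never justify why large $t$ suffices.
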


\begin{proof}[Proof]
	Again, we will start with a $t$-partite graph $G_0$ but this time choose $t = R_k(|H|)$,
	that is $t$ is the smallest integer such that whenever the edges of $K_t$ are
	coloured with $k$ colours we are guaranteed to get a monochromatic copy of $K_{|H|}$.
	Construct $G_0$ by taking vertex disjoint copies of $H$ in such a way that whenever $|H|$
	vertex classes of $G_0$ are taken there is a copy of $H$ that intersects them
	all. Note that if the edges of $G_0$ are coloured with $k$ colours and if every pair
	of vertex classes spans a monochromatic bipartite graph then there is a monochromatic
	copy of $H$.

	We will apply a slighty modified process of amalgamation to $G_0$. Given a $t$-partite
	graph $G_\alpha = ((V_i), E)$ and integers $1 \le i < j \le t$, define a $t$-partite
	graph $\A_{i,j}^\ast(G_\alpha)$ as follows. Let $G_{i,j} = G_\alpha[V_i \cup V_j]$ be
	the bipartite graph induced by $G_\alpha$ on vertex classes $V_i$ and $V_j$. Let
	$G_{i,j}^\ast$ be a bipartite graph on vertex classes $X$ and $Y$ such that whenever the
	edges of $G_{i,j}^\ast$ are coloured with $k$ colours, one of its induced subgraphs
	isomorphic to $G_{i,j}$ (with $V_i \subset X$ and $V_j \subset Y$) is monochromatic. It is
	not obvious that $G_{i,j}^\ast$ exists but we defer the explanation to the end of the proof.
	For every induced subgraph $A \subset G_{i,j}^\ast$ that is isomorphic to $G_{i,j}$
	(with $V_i \subset X$ and $V_j \subset Y$), take a copy $G^A$ of $G_\alpha$ making sure
	that $G^A[V_i, V_j] = A$ and that the copies are disjoint everywhere else. Finally, define
	$\A_{i,j}^\ast(G_\alpha) = \bigcup_A G^A$. Then $\A_{i,j}^\ast(G_\alpha)$ does not contain
	$K_s$ as a subgraph (taking induced copies of $G_{i,j}$ in the previous step was crucial)
	and whenever the edges of $\A_{i,j}^\ast(G_\alpha)$ are coloured with $k$ colours
	we are guaranteed to get a copy of $G_\alpha$ with the edges joining vertex classes
	$V_i$ and $V_j$ having the same colour.

	List all pairs $(i,j)$, $1 \le i < j \le t$, in some order $(i_1, j_1), \dotsc,
	(i_T, j_T)$ where $T = \binom{t}{2}$. Define
	$G = \A_{i_1, j_1}^\ast \left(\A_{i_2, j_2}^\ast\left(\dotsb \A_{i_T, j_T}^\ast\left(G_0\right)
	\dotsb \right)\right).$
	If the edges of $G$ are coloured with $k$ colours then $G$ is bound to have a copy of $G_0$
    every pair of vertex classes spanning a monochromatic bipartite subgraph, and hence a monochromatic
    copy of $H$. Moreover, $G$ does not contain $K_s$ as a subgraph.

	It remains to prove the existence of $G_{i,j}^\ast$. For this we use the well known
	Hales--Jewett theorem (see \cite{hales63}). Let the vertex classes be $X = V_i^n$ and
	$Y = V_j^n$ where $n$ is a large integer. Join $(x_1, \dotsc, x_n)$ and $(y_1, \dotsc, y_n)$
	by an edge if $x_1 y_1, \dotsc, x_n y_n$ are edges in $G_{i,j}$. So if $F$ is the set of
	edges of $G_{i,j}$ then $G_{i,j}^\ast$ has edges $F^n$. By the Hales--Jewett theorem, for
	sufficiently large $n$ every colouring of $F^n$ with $k$ colours has a monochromatic
	combinatorial line. It is not difficult to see that a combinatorial line in $F^n$ corresponds
    to induced copy of $G_{i,j}$.
\end{proof}

We proceed by proving a proposition which we will use to extend the technique to non-partite
graphs. Before stating it we introduce a simple construction. Given graphs $G$ and $H$, define the
\emph{pair graph} of $G$ and $H$ to be the graph obtained by taking vertex disjoint copies of $G$ and
$H$ and joining each vertex in the copy of $G$ to each vertex in the copy of $H$ by an edge. Denote
this graph by $G \pg H$ and call the edges joining the copy of $G$ to the copy of $H$
\emph{intermediate}. 

\begin{prop} \label{prop:technical}
	Let $k$ and $s$ be positive integers and suppose $G_0$ and $H_0$ are graphs that do not
	contain $K_s$. Then there are graphs $G$ and $H$ such that
	\begin{enumerate}[(i)]
		\item $G$ and $H$ do not contain $K_s$, and
		\item whenever the intermediate edges of $G \pg H$ are coloured with $k$ colours,
			there is a copy of $G_0 \pg H_0$ with $G_0 \subset G$, $H_0 \subset H$ and
			all intermediate edges monochromatic.
	\end{enumerate}
\end{prop}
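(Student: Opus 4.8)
The plan is to combine a double amalgamation along intermediate edges with the Hales--Jewett theorem, mirroring the construction of $G_{i,j}^\ast$ in the proof of Lemma~\ref{lem:e-vs-ks}. First I would show that it suffices to produce, for a \emph{single} pair of graphs, a bipartite-style ``star amalgam'': a graph $G \pg H$ (with $G, H$ free of $K_s$) such that every $k$-colouring of its intermediate edges contains an \emph{induced-intermediate} copy of $G_0 \pg H_0$ (meaning $G_0 \subset G$, $H_0 \subset H$, and the intermediate edges between these copies are exactly those present in $G_0 \pg H_0$) with all intermediate edges monochromatic. The key construction, as in Lemma~\ref{lem:e-vs-ks}, is to set $V(G) = V(G_0)^n$ and $V(H) = V(H_0)^n$ for $n$ large, declaring $(x_1,\dots,x_n)(y_1,\dots,y_n)$ to be an intermediate edge precisely when each $x_i y_i$ is an intermediate edge of $G_0 \pg H_0$; within the copy of $G$ one puts down a copy of (an amalgam-processed version of) $G_0$ on each ``diagonal'' fibre, and similarly for $H$. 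A combinatorial line in the Hales--Jewett space then corresponds to an induced copy of $G_0 \pg H_0$, and monochromaticity of the line gives monochromaticity of the intermediate edges.

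The main technical point is ensuring that $G$ and $H$ remain $K_s$-free while simultaneously embedding enough copies of $G_0$ and $H_0$ inside them to realise the desired induced structure. As the parenthetical remark in Lemma~\ref{lem:e-vs-ks} stresses (``taking induced copies of $G_{i,j}$ in the previous step was crucial''), I would take \emph{induced} copies of $G_0$ inside $G$ and of $H_0$ inside $H$: concretely, first apply Corollary~\ref{cor:v} or the internal amalgamation machinery to replace $G_0$ by a $K_s$-free graph each of whose relevant substructures is induced, do the same for $H_0$, and only then perform the Hales--Jewett product on the intermediate edges. Because a clique in $G \pg H$ that uses vertices from both sides would, projected to a single coordinate, force a clique spanning an intermediate edge of $G_0 \pg H_0$ together with internal edges — and because the internal graphs were chosen $K_s$-free and the copies placed so that no new cliques are created across fibres — one checks $G \pg H$ contains no $K_s$; a clique lying entirely in $G$ (or entirely in $H$) is ruled out by $K_s$-freeness of $G$ (resp.\ $H$).

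I would then handle the Ramsey property by the standard line-extraction argument: a $k$-colouring of the intermediate edges of $G \pg H$ is a $k$-colouring of $F^n$, where $F$ is the set of intermediate edges of $G_0 \pg H_0$; Hales--Jewett (for $n$ sufficiently large depending on $|F|$ and $k$) yields a monochromatic combinatorial line, and the variable coordinates of that line trace out a sub-bipartite-graph between a fibre of $G$ and a fibre of $H$ that is isomorphic, as an induced intermediate structure, to $F$ — i.e.\ to the intermediate edges of $G_0 \pg H_0$ sitting on the pre-installed induced copies of $G_0$ and $H_0$. The fixed coordinates of the line are what force the copy to be \emph{induced} rather than merely a subgraph, so no spurious intermediate edges appear.

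The step I expect to be the main obstacle is bookkeeping the interaction between the ``internal'' structure (the copies of $G_0$ and $H_0$, which must be $K_s$-free and induced) and the ``external'' Hales--Jewett product on intermediate edges, so that a combinatorial line simultaneously picks out a genuine induced copy of $G_0 \pg H_0$ \emph{and} keeps the whole graph $K_s$-free. One clean way to organise this is to iterate: first build a single-pair version as above for $G_0, H_0$ themselves, then — if one needs the copies of $G_0, H_0$ inside $G, H$ to themselves be $K_s$-free and induced — precompose with Corollary~\ref{cor:v} applied to $G_0$ and to $H_0$, noting that those outputs are still $K_s$-free. The verification that cliques cannot sneak in across coordinates is a routine projection argument but must be stated carefully; everything else is the familiar amalgamation/Hales--Jewett bookkeeping.
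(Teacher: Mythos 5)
Your proposal heads in a genuinely different direction from the paper, and it has a real gap at exactly the point you yourself flag as ``the main obstacle''. Two issues. First, the Hales--Jewett machinery is solving a problem that is not there: in the pair graph $G \pg H$ \emph{every} vertex of $G$ is joined to \emph{every} vertex of $H$, so a colouring of the intermediate edges is just a function on $V(G) \times V(H)$, and the intermediate structure between any $|V(G_0)|$-subset of $G$ and any $|V(H_0)|$-subset of $H$ is automatically a complete bipartite graph, i.e.\ automatically ``induced''. The worry about spurious intermediate edges, which is genuine for the non-complete bipartite graphs $G_{i,j}$ in Lemma \ref{lem:e-vs-ks}, is vacuous here. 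Second, and more seriously, your construction requires installing copies of $G_0$ on the diagonal fibres of $V(G_0)^n$ (so that the variable coordinates of a monochromatic combinatorial line trace out an actual copy of $G_0$ inside $G$), and you never verify that the resulting graph $G$ is $K_s$-free. Distinct diagonals intersect, and edges coming from different diagonal copies of $G_0$ can combine into cliques contained in no single copy; this is precisely why the construction in Lemma \ref{lem:e-vs-ks} keeps the Hales--Jewett product purely bipartite and only glues on the copies of $G_\alpha$ afterwards, ``disjoint everywhere else''. Deferring this to ``a routine projection argument'' does not close the gap.

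The paper's proof avoids all of this with a two-step pigeonhole argument. Take $H$ to be $k$-vertex-Ramsey for $H_0$ and $K_s$-free, and $G$ to be $k^{|H|}$-vertex-Ramsey for $G_0$ and $K_s$-free (both exist by Corollary \ref{cor:v}). A colouring $c$ of the intermediate edges induces the vertex colouring $x \mapsto (c(xy))_{y \in V(H)}$ of $G$ with $k^{|V(H)|}$ colours; a monochromatic copy of $G_0$ means the colour of $xy$ depends only on $y$, which in turn gives a $k$-colouring of the vertices of $H$, and a monochromatic copy of $H_0$ finishes the proof. If you want to salvage your approach you would need to establish $K_s$-freeness of the diagonal-decorated cube, but there is no need: the product colouring does the whole job with no Hales--Jewett input at all.
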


\begin{proof}
	By Corollary \ref{cor:v}, there is a graph $H$ that is $k$-vertex-Ramsey for $H_0$ but
	contains no $s$-cliques. There is also a graph $G$ that is $k^{|H|}$-vertex-Ramsey
	for $G_0$ but contains no $s$-cliques.

	Let $c$ be any colouring of the intermediate edges of $G \pg H$ with colours $[k]$. It
	induces a vertex colouring $c_G : G \to [k]^{V(H)}$ given by $c_G(x) = (c(xy))_{y\in V(H)}$.
	By our choice of $G$, some copy $G_0 \subset G$ is monochromatic under $c_G$. In other words, the
	colour of an intermediate edge of $G_0 \pg H$ only depends on its endpoint in $H$ and not on
	the one in $G_0$. Now define a vertex colouring $c_H : H \to [k]$ by letting each vertex of
	$H$ have the colour of any edge joining it to $G_0$. Some copy $H_0 \subset H$ is
	monochromatic under $c_H$ which means exactly that all intermediate edges of $G_0 \pg H_0$
	have the same colour.
\end{proof}

\subsection{Proof of Lemma \ref{lem:main}} \label{subsec:mainproof}

Let $\hat{G}$ be a fixed graph and suppose that for each vertex $v \in \hat{G}$ a graph $F_v$
is chosen, and write $\F = (F_v)_{v\in V(\hat{G})}$ for the collection of chosen graphs.
Define the \emph{$\F$-blowup of $\hat{G}$}, denoted by $\hat{G}(\F)$, as follows.
\begin{itemize}
    \item Each vertex $v$ of $\hat{G}$ is replaced by a copy of $F_v$ and these copies are
        pairwise vertex disjoint.
    \item For every edge $uv$ of $\hat{G}$, each vertex of $F_u$ is joined to each vertex of
        $F_v$ by an edge. Call such edges \emph{$uv$-intermediate}.
    \item For every nonedge $uv$ of $\hat{G}$ there are no edges between $F_u$ and $F_v$.
\end{itemize}
Note that the previously introduced pair graph is a special case of a blowup: $G \pg H$ is
the same as $K_2(G, H)$.
\begin{figure}[ht]
	\centering
	\includegraphics{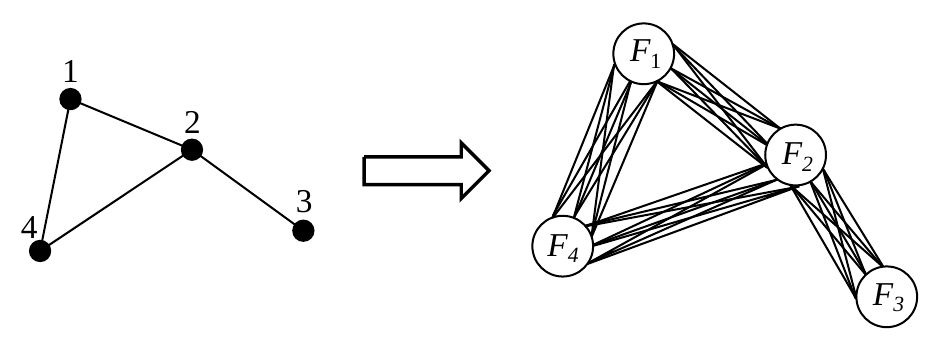}
	\caption{Construction of blowup.}
\end{figure}

Lemma \ref{lem:e-vs-ks} implies the existence of a graph $F$ that is $6$-edge-Ramsey for
$K_3$ but contains no copies of $K_4$. Set $\F_0 = (F)_{v \in V(\hat{G})}$, that is, a copy
of $F$ is chosen for each vertex $v \in \hat{G}$. Enumerate the edges of $\hat{G}$ as
$e_1, \dotsc, e_n$ and consider them one by one in this order. Suppose $e_i = u_i v_i$ is
considered and we have a list of graphs $\F_{i-1} = (F_v)_{v \in V(\hat{G})}$. By
Proposition \ref{prop:technical}, there are graphs $F'$ and $F''$ such that
\begin{enumerate}[(i)]
	\item $F'$ and $F''$ contain no copies of $K_4$, and
	\item whenever the intermediate edges of $F' \pg F''$ are coloured with six colours,
		we can find copies $F_{u_i} \subset F'$ and
	$F_{v_i} \subset F''$ with all intermediate edges of $F_{u_i} \pg F_{v_i}$ of the same colour.
\end{enumerate}

Replace $F_{u_i}$ by $F'$ and $F_{v_i}$ by $F''$ to obtain a new list $\F_i$. Do this for all edges
in turn and use the final list $\F_n$ to form the blowup $G = \hat{G}(\F_n)$.

Take any colouring $c$ of the edges of $G$ with six colours. By the choice of $G$, it contains
a copy of $\hat{G}(\F_{n-1})$ with the $e_n$-intermediate edges monochromatic. Furthermore, the
edges of this copy are also coloured with six colours and so it contains a copy of
$\hat{G}(\F_{n-2})$ with the $e_{n-1}$-intermediate edges monochromatic (so in fact, both the
$e_n$-intermediate and the $e_{n-1}$-intermediate edges are monochromatic). Repeat this argument
to conclude that there is a copy of $\hat{G}(\F_0)$ with the $e$-intermediate edges monochromatic
for each edge $e \in \hat{G}$. Moreover, the edges of the copy of $F$ which corresponds to a
vertex $v \in \hat{G}$ are coloured with six colours so it contains a monochromatic
$K_3$. Having this in mind, $c$ induces a colouring $\hat{c}$ of both the edges and the vertices
of $\hat{G}$ (so $\hat{c}$ is a total colouring) with six colours where
\begin{align*}
	\hat{c}(v) &= \text{colour of a monochromatic } K_3 \text{ in the corresponding copy of } F
		& \text{ if } v \in V(\hat{G}) \\
	\hat{c}(e) &= \text{colour of all } e \text{-intermediate edges in the copy of } \hat{G}(\F_0)
		& \text{ if } e \in E(\hat{G}).
\end{align*}
Notice that if $\hat{c}$ contains
\begin{enumerate}[\hspace{12pt}(a)]
	\item \label{item:e-k4} a $K_4$ with all edges of the same colour, or
	\item \label{item:ve} a vertex and an incident edge of the same colour
\end{enumerate}
then $c$ contains a monochromatic $K_4$. Moreover, if $\hat{c}$ contains
\begin{enumerate}[\hspace{12pt}(a)] \setcounter{enumi}{2}
	\item \label{item:eopep} an edge $e$ both of whose endpoints have the colour opposite
		to the colour of $e$
\end{enumerate}
then $c$ contains a special $K_4$.

Conversely, suppose that $\hat{c}$ is a total colouring of $\hat{G}$ with six colours. It induces
an edge colouring $c$ of $G$ with six colours where
$$
	c(e) =
	\begin{cases}
		\hat{c}(v) & \text{if } e \text{ is an edge of the graph which corresponds to a vertex }
			v \in \hat{G} \\
		\hat{c}(e') & \text{if } e \text{ is an } e' \text{-intermediate edge.}
	\end{cases}
$$

A monochromatic copy of $K_4$ appears in $c$ if and only if $\hat{c}$ contains (\ref{item:e-k4}) or
(\ref{item:ve}). Putting this together with earlier considerations reduces the initial problem to 
finding a graph $\hat{G}$ that admits a total colouring with six colours with no (\ref{item:e-k4})
and (\ref{item:ve}) but whose every total colouring with six colours produces (\ref{item:e-k4}),
(\ref{item:ve}) or (\ref{item:eopep}).

Start with a graph $H$ that is $4$-edge-Ramsey for $K_4$ but not $5$-edge-Ramsey for $K_4$.
Lemma \ref{lem:v-vs-e} enables us to choose $\hat{G}$ that is $3$-vertex-Ramsey for $H$ but not
$5$-edge-Ramsey for $K_4$. If we use five colours to colour the edges of $\hat{G}$ avoiding
a monochromatic $K_4$ and use another colour for all the vertices, we get a total colouring
without (\ref{item:e-k4}) and (\ref{item:ve}). On the other hand, given a total colouring of
$\hat{G}$ with six colours, first consider the colours in each pair of opposite colours as
being the same, thereby reducing the number of colours to three. By the choice of $\hat{G}$, there
is a copy of $H$ with monochromatic vertex set. This means that in the original total colouring
the vertices of this copy of $H$ are coloured using only one pair of opposite colours, say, red
and blue. If some edge of this copy of $H$ is red or blue then we get (\ref{item:ve}) or
(\ref{item:eopep}). Otherwise, only four colours are used for these edges so we have
(\ref{item:e-k4}).

This concludes the proof of Lemma \ref{lem:main}. \qed

\section{Ordered degrees} \label{sec:ordereddegrees}

In this section we define and briefly examine the notion of `ordered' degrees for sets of
multiple vertices. Let $D(G)$ be an orientation of an $r$-uniform hypergraph $G$. Given
a pair of vertices $u,v$, we can define $\dd_{12}(u,v)$ to be the number of edges $e$
such that $u$ is in the first position of $D(e)$ and $v$ is in the second. For example,
if $E(G) = \left\{(4,5,1), (4,1,3), (1,4,2)\right\}$ then $\dd_{12}(1,4) = 1$.

More generally, for an \emph{ordered} $p$-tuple of distinct vertices $A = (v_1, \dotsc, v_p)$
and an \emph{ordered} $p$-tuple $I = (i_1, \dotsc, i_p) \subset [r]$, the \emph{ordered $I$-degree} of $A$,
denoted by $\dd_I(A)$, is the number of edges $e$ such that the elements of $D(e)$ in
positions labeled by $I$ are the vertices $v_1, \dotsc, v_p$ in \emph{this order}. More
formally, $\dd_I(A)$ is the number of edges $e$ such that if we write $D(e) = (x_1, \dotsc, x_r)$
then $x_{i_1} = v_1, \dotsc, x_{i_p} = v_p$.

In this section by a \emph{$p$-tuple} we will always mean an ordered $p$-set without
repeated elements. For any set $S$ we denote by $S\otup{p}$ the family of all
$p$-tuples with elements from $S$. For example,
$[3]\otup{2} = \left\{ (1,2), (1,3), (2,1), (2,3), (3,1), (3,2) \right\}$.

Following the spirit of Theorems \ref{thm:hakimi}, \ref{thm:hakforhyp} and
Question~\ref{qn:genhak}, we can ask when an $r$-uniform hypergraph $G$ can be given an
orientation such that $\dd_{(1,2,\dotsc,p)}(A) \le k$ for all $p$-tuples of vertices $A$,
where $k \ge 0$ is a fixed integer.

It is easy to find a necessary condition: for any collection of $p$-sets 
$U \subset V\tup{p}$ there can be no more than $k p! |U|$ edges $e$ such that $e\tup{p}
\subset U$. Indeed, every such edge contributes to the sum
$$
\sum_{A \in U} \sum_{\substack{A^\ast \text{ an} \\ \text{ordering of } A}}
\dd_{(1,2,\dotsc,p)}(A^\ast)
$$
by $1$ and this sum does not exceed $k p! |U|$.

The proof of sufficiency is almost identical to Theorem \ref{thm:hakforpsets}.

\begin{thm} \label{thm:hakforptuples}
    Fix integers $k \ge 0, 1 \le p \le r$ and let $G$ be an $r$-uniform hypergraph.
    Suppose that for any $U \subset V\tup{p}$ there are at most $k p! |U|$ edges
    $e$ such that $e\tup{p} \subset U$. Then $G$ has an orientation such that
    $\dd_{(1,2,\dotsc,p)}(A) \le k$ for every $p$-tuple $A \subset V$.
\end{thm}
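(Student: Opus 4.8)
The plan is to mimic the proof of Theorem~\ref{thm:hakforpsets} essentially verbatim, replacing ``choose the first $p$ positions'' by ``choose an ordered placement of an ordered $p$-tuple into the first $p$ positions'', and then reduce to Hall's marriage theorem. Concretely, I would build a bipartite auxiliary graph $H$ with one class $X = E(G)$ and the other class $Y = V(G)\otup{p}$, the set of all ordered $p$-tuples of distinct vertices. I join $e \in X$ to an ordered tuple $A = (v_1,\dots,v_p) \in Y$ precisely when $\{v_1,\dots,v_p\} \subset e$; so each $e$ is joined to exactly $p!\binom{r}{p}$ tuples, namely all orderings of all its $p$-subsets. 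An orientation $D(G)$ picks, for each edge $e$ with $D(e) = (x_1,\dots,x_r)$, the particular matched tuple $(x_1,\dots,x_p)$; this is a subset $S \subset E(H)$ covering every vertex of $X$ exactly once, and conversely any such ``perfect-from-$X$'' set arises from some (indeed many) orientations $D(G)$, since the placement of $x_{p+1},\dots,x_r$ is free. Under this correspondence, the number of times a tuple $A \in Y$ is covered by $S$ equals $\dd_{(1,2,\dots,p)}(A)$.

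Next I would invoke the defect/deficiency version of Hall's theorem (exactly as in the proof of Theorem~\ref{thm:hakforpsets}): $G$ has an orientation with $\dd_{(1,\dots,p)}(A)\le k$ for all $A$ if and only if for every $W \subset X$ we have $|W| \le k|\Gamma(W)|$, where $\Gamma(W)\subset Y$ is the neighbourhood. Restating, this says $|W| \le k|\tilde U|$ whenever $\Gamma(W)\subset \tilde U$ for some $\tilde U \subset Y$; and for a fixed $\tilde U$ the largest such $W$ is $\{e \in E(G) : \text{every ordering of every } p\text{-subset of } e \text{ lies in } \tilde U\}$. The key observation making the translation to the stated hypothesis work is that the extremal choice of $\tilde U$ is ``downward closed under reordering'': if some ordering of a $p$-set $A$ forces $A$ into $\tilde U$, we may as well throw in all orderings of $A$. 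So it suffices to consider $\tilde U$ of the form $\bigcup_{A \in U}\{\text{orderings of } A\}$ for some $U \subset V\tup{p}$, and then $|\tilde U| = p!|U|$ while the corresponding largest $W$ is exactly $\{e \in E(G) : e\tup{p}\subset U\}$. Hence the Hall condition becomes: for all $U \subset V\tup{p}$, $|\{e : e\tup{p}\subset U\}| \le k\,p!\,|U|$, which is precisely the hypothesis.

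I expect the only genuinely delicate point to be the ``downward closure under reordering'' step in the previous paragraph: one must argue that replacing an arbitrary $\tilde U \subset Y$ by the reordering-closure of its ``fully captured'' $p$-sets can only help (it enlarges $W$ at least as fast as it enlarges $\tilde U$, since a $p$-set not all of whose orderings are in $\tilde U$ contributes nothing to $W$ anyway but would contribute to $|\tilde U|$). Once that is in place everything else is a routine transcription of the earlier argument. Since the paper itself says ``The proof of sufficiency is almost identical to Theorem~\ref{thm:hakforpsets}'', I would keep the write-up short, pointing to that proof and highlighting only the $p!$ bookkeeping and the reordering-closure observation.
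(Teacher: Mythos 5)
Your proposal is correct and follows essentially the same route as the paper: the same bipartite auxiliary graph between $E(G)$ and $V(G)\otup{p}$, the same correspondence between orientations and subsets covering each element of $E(G)$ exactly once, and Hall's marriage theorem with the $p!$ bookkeeping. The paper's write-up is only slightly more direct---it observes that $\Gamma(S)$ is automatically closed under reordering for every $S \subset E(G)$, so the hypothesis yields $|S| \le k|\Gamma(S)|$ immediately, without the separate closure argument on $\tilde U$.
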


\begin{proof}
    Construct a bipartite graph $H$ with vertex classes $X = E(G)$ and
    $Y = V(G)\otup{p}$. Join $e \in X$ and $A \in Y$ by an edge if $e$ contains
    all elements of $A$.

    Suppose $S \subset X$ and let $\Gamma(S) \subset Y$ be the neighbourhood
    of $S$ in $H$. All $p!$ permutations of any element of $\Gamma(S)$ are in
    $\Gamma(S)$ so by assumption $|S| \le k|\Gamma(S)|$. By Hall's marriage
    theorem there is a $1$-to-$k$ matching from $X$ to $Y$. Orient each edge
    $e \in E(G) = X$ in such a way that the initial $p$-positions of $D(e)$
    are the $p$-tuple to which $e$ is joined in this matching. This produces
    an orientation of $G$ with the required property.
\end{proof}

A version of Question \ref{qn:orient-part} can be asked for ordered degrees.
When does an $r$-uniform hypergraph $G$ have an orientation such that for each
$p$-tuple of vertices $A$ there is a $p$-tuple $I \subset [r]$ such that
$\dd_I(A) = 0$? If $p = 1$ then this is covered by Theorem \ref{thm:alldeg}
so let us assume $p \ge 2$.

In contrast to the notion of `unordered' degrees, it turns out that every
$G$ has such an orientation. In fact, this can be achieved by a simple explicit
construction.

\begin{thm}
    Let $2 \le p \le r$ be integers and $G$ an $r$-uniform hypergraph. Then $G$ has an
    orientation such that for each $p$-tuple of vertices $A$ there is a $p$-tuple
    $I \subset [r]$ such that $\dd_I(A) = 0$.
\end{thm}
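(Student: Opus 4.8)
The plan is to give an explicit orientation rule that works uniformly for all edges. Fix once and for all two specific position-pairs, say $I_1 = (1,2)$ and $I_2 = (2,1)$, which are the same set $\{1,2\}$ but in opposite orders; the key observation is that no single edge orientation can place a given unordered pair $\{u,v\}$ so that it realizes \emph{both} the ordered pair $(u,v)$ via $I_1$ \emph{and} the ordered pair $(v,u)$ via $I_1$ — but more usefully, for a fixed ordered pair $A=(u,v)$, an edge contributes to $\dd_{(1,2)}(u,v)$ iff its first two vertices are $u,v$ in that order, and contributes to $\dd_{(2,1)}(u,v)$ iff its first two vertices are $v,u$. These two events are mutually exclusive for any single edge. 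So if we could arrange that every edge of $G$ receives an orientation in which, for every $p$-tuple $A$ of its vertices, \emph{not both} of these are simultaneously violated... but that is automatic. The real task is subtler: we need a \emph{global} choice of $I_A$ (depending only on the $p$-tuple $A$, not on the edge) such that $\dd_{I_A}(A)=0$.

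First I would reduce to the case $p = 2$ in spirit: the essential phenomenon is already visible there, and for general $p$ one can use the ordered pair $I = (i_1, i_2, \dots)$ where only the first two coordinates matter for the obstruction. So here is the concrete construction I would propose. Linearly order the vertex set $V$. For each edge $e$, orient it by listing its vertices in increasing order. Now consider any ordered $p$-tuple $A = (v_1, \dots, v_p)$ of distinct vertices. Define $I_A$ based on the relative order of $v_1$ and $v_2$: if $v_1 < v_2$ in the vertex order, set $I_A = (2,1,3,4,\dots,p)$; if $v_1 > v_2$, set $I_A = (1,2,3,4,\dots,p)$. I claim $\dd_{I_A}(A) = 0$. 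Indeed, suppose some edge $e$ with $D(e) = (x_1,\dots,x_r)$ (increasing) contributed to $\dd_{I_A}(A)$ with, say, $v_1 < v_2$ and $I_A = (2,1,3,\dots,p)$. Then $x_2 = v_1$ and $x_1 = v_2$, forcing $v_2 = x_1 < x_2 = v_1$, contradicting $v_1 < v_2$. The case $v_1 > v_2$ is symmetric. This handles every $p$-tuple, and the orientation is well-defined and independent of which edge we look at, which is exactly what is required.

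The main obstacle — really the only thing needing care — is making sure the definition of $I_A$ is a legitimate $p$-tuple of distinct elements of $[r]$ and that the argument genuinely uses $p \ge 2$ (for $p = 1$ there is only the single position structure and the statement is instead the content of Theorem \ref{thm:alldeg}, which is why the paper excludes it). Since $r \ge p \ge 2$, the tuples $(1,2,3,\dots,p)$ and $(2,1,3,\dots,p)$ are both valid orderings of the $p$-set $\{1,\dots,p\} \subseteq [r]$, so there is no issue. One should also remark why this does not contradict the negative result for unordered degrees: the trick exploits that $(i_1,i_2)$ and $(i_2,i_1)$ are \emph{different} ordered position-tuples, so we have twice as many ``colours'' $I$ available, and in particular we can pick $I_A$ to depend on the internal order of $A$ — a freedom that simply does not exist in the unordered setting, where $\{1,2\}$ is a single colour. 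I would close the proof with exactly this one-paragraph verification; no amalgamation or Ramsey machinery is needed here.

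\begin{proof}
    Fix a linear order on $V$. Orient each edge $e$ of $G$ by listing its vertices in increasing order; call the resulting orientation $D(G)$. Let $A = (v_1, \dots, v_p)$ be any $p$-tuple of distinct vertices. If $v_1 < v_2$, put $I_A = (2, 1, 3, 4, \dots, p)$; if $v_1 > v_2$, put $I_A = (1, 2, 3, 4, \dots, p)$. Since $r \ge p \ge 2$, in either case $I_A$ is an ordered $p$-tuple of distinct elements of $[r]$.

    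We check $\dd_{I_A}(A) = 0$. Suppose some edge $e$ contributes to $\dd_{I_A}(A)$, and write $D(e) = (x_1, \dots, x_r)$, so $x_1 < x_2 < \dots < x_r$. If $v_1 < v_2$, then $I_A = (2,1,3,\dots,p)$ and contributing means $x_2 = v_1$ and $x_1 = v_2$; but then $v_2 = x_1 < x_2 = v_1$, contradicting $v_1 < v_2$. If $v_1 > v_2$, then $I_A = (1,2,3,\dots,p)$ and contributing means $x_1 = v_1$, $x_2 = v_2$; but then $v_1 = x_1 < x_2 = v_2$, contradicting $v_1 > v_2$. Hence no edge contributes, so $\dd_{I_A}(A) = 0$, as required.
\end{proof}
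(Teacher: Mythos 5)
Your proof is correct and takes essentially the same approach as the paper: fix a linear order on $V$, orient every edge increasingly, and choose $I_A$ according to the internal order of $A$ so that the forced inequality among the $x_i$ contradicts it. The only cosmetic difference is that the paper distinguishes the fully increasing tuples (using $I = (p, p-1, \dotsc, 1)$) from all others (using the identity), whereas you compare only $v_1$ and $v_2$ and swap the first two positions; both verifications are one line.
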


\begin{proof}
    Without loss of generality assume that $V(G) = [n]$. For each edge $e$, order
    its vertices in the increasing order.

    Let $A = (a_1, \dotsc, a_p)$ be a $p$-tuple of vertices. If $a_1 < \dotsb < a_p$
    then $\dd_{(p,p-1,\dotsc,1)}(A) = 0$. Otherwise, $\dd_{(1,2,\dotsc,p)}(A) = 0$.
\end{proof}

\section{Open problems} \label{sec:openproblems}

Question \ref{qn:orient-part} has a negative answer if $(r, p) = (4, 2)$ which is the
smallest pair for which there is a nonconstant $f : [r]\tup{p} \to [r]\tup{p}$ that does
not fix an intersection, and our proof went by constructing
a graph that is not $6$-edge-Ramsey for $K_4$ but whose every edge-colouring with
colours $[4]\tup{2}$ induces a colouring on a $K_4$ that does not fix an intersection.
It might be interesting to know if Question \ref{qn:orient-part} has a negative answer
exactly when the conclusion of Conjecture \ref{conj:main} is false, and perhaps this could
be done by adapting the same construction for hypergraphs.

\begin{qn}
    Let $1 \le p \le r$ be integers such that $\sset{r}{p}$ does not have the fixed intersection
    property. Must there be an $r$-uniform hypergraph $G$ satisfying:

    \begin{enumerate}[(a)]
        \item $V\tup{p}$ can be partitioned into $R = \binom{r}{p}$ sets $W_1, \dotsc, W_R$ in
            such a way that for any $1 \le i \le R$ there are no edges $e$ such that
            $e\tup{p} \subset W_i$;

        \item whenever $G$ is given an orientation there is some $p$-set of vertices $A$ such that
            $d_I(A) > 0$ for all $p$-sets $I \subset [r]$?
    \end{enumerate}
\end{qn}

Using brute-force computer search we were able to check Conjecture \ref{conj:main} for
cases $r \le 5$. Our main progress in the general case is achieved by showing that pairs
$(r, p)$ with $r$ much larger than $p$ have
the stronger property that every nonconstant $f \colon [r]\tup{p} \to \N\tup{p}$ fixes
an intersection (Corollary \ref{cor:infrang}). 

\begin{qn} \label{it:infrang}
    For what choices of $r$ and $p$  does every nonconstant function
    $f \colon \sset{r}{p} \to \N\tup{p}$ fix an intersection?
\end{qn}

The interesting choices of $r$ and $p$ are when $r$ is slightly larger than $2p$. Computer
search reveals that this question is satisfied by $(r, p) = (8, 3)$ but not by $(r, p) = (7, 3)$.
Therefore Conjecture \ref{conj:main} cannot be proved solely by answering this question. However,
if $f \colon [7]\tup{3} \to [m]\tup{3}$ does not fix an intersection then $m$ must be large.
In fact, the smallest such $m$ is $31$. Curiously, every such $f$ (even if $m > 31$) is injective.
Therefore it might be that the additional injectivity condition, would not change the answer.

\begin{qn} \label{it:infranginj}
    For what choices of $r$ and $p$ does every injection $f \colon \sset{r}{p} \to \N\tup{p}$
    fix an intersection?
\end{qn}

In Conjecture \ref{cor:infrang} we deal with functions $\sset{r}{p} \to \sset{r}{p}$. In
this setting the analogue of Question \ref{it:infranginj} is as follows.

\begin{qn} \label{it:finrangbij}
    For what choices of $r$ and $p$ does every bijection $f \colon \sset{r}{p} \to \sset{r}{p}$
    fix an intersection?
\end{qn}

There is a simple counting argument that shows Question \ref{it:finrangbij} is satisfied by
$r = \Omega(p^2)$. Suppose $f \colon [r]\tup{p} \to [r]\tup{p}$ is a bijection and
assume $f([p]) = [p]$ for convenience. There are exactly $\binom{r-p}{p}$ elements of
$[r]\tup{p}$ that do not intersect $[p]$ and $\binom{r}{p} - \binom{r-p}{p}$ elements
that intersect $[p]$. So if there is no $x \in [r]\tup{p}$ such that
$|f(x) \cap [p]| = |x \cap [p]| = 0$ then $\binom{r}{p} > 2\binom{r-p}{p}$ which is
equivalent to
$\left(1 + p/(r-p)\right) \left(1 + p/(r-p-1)\right) \dotsb \left( 1 + p/(r-1)\right) > 2$.
But then $\exp\left(p/(r-p) + \dotsb + p/(r-1)\right) > 2$. If $r \ge cp^2$ then the left
hand side is at most $e^{1/(c-1)}$ which is not greater than $2$ provided $c$ is large enough.

\section*{Acknowledgements}

We would like to thank Imre Leader for his invaluable comments regarding the presentation of this
article.

\bibliographystyle{elsarticle-num}

\end{document}